\documentclass[article,11pt,reqno]{amsart}
\textwidth=16.00cm 
\textheight=22.00cm 
\topmargin=0.00cm
\oddsidemargin=0.00cm 
\evensidemargin=0.00cm 
\headheight=0cm 
\headsep=0.5cm

\usepackage{etoolbox}

\makeatletter
\patchcmd{\@startsection}
  {\@afterindenttrue}
  {\@afterindentfalse}
  {}{}
\makeatother

\usepackage{latexsym,array,delarray,amsthm,amssymb}
\usepackage{enumerate}
\usepackage{tikz}

\usepackage[colorinlistoftodos]{todonotes} 
\setlength{\marginparwidth}{2cm} 

\usepackage[hidelinks]{hyperref}

\theoremstyle{plain}
\newtheorem{thm}{Theorem}[section]
\newtheorem{lemma}[thm]{Lemma}
\newtheorem{prop}[thm]{Proposition}

\newtheorem{conj}[thm]{Conjecture}
\newtheorem*{thm*}{Theorem}
\newtheorem*{lemma*}{Lemma}
\newtheorem*{prop*}{Proposition}
\newtheorem*{cor*}{Corollary}
\newtheorem*{conj*}{Conjecture}

\theoremstyle{definition}

\newtheorem{ex}[thm]{Example}

\newtheorem{ques}[thm]{Question}

\theoremstyle{remark}



\newcommand{\Z}{\mathbb{Z}}
\newcommand{\N}{\mathbb{N}}

\newcommand{\R}{\mathbb{R}}

\newcommand{\TT}{\mathbb{T}}
\newcommand{\GL}{\mathrm{GL}}

\newcommand{\poly}{\mathcal{P}({\Z^d})}

\newcommand{\polyt}{\mathcal{P}({\Z^2})}

\def\OL{\operatorname{L}}
\def\MM{\operatorname{M}}




\DeclareMathOperator*{\aff}{aff}

\newcommand{\conv}{\mathrm{conv}}

\newcommand{\interior}[1]{#1^\circ}

\newcommand*\diff{\mathop{}\!\mathrm{d}}


\title{Ehrhart tensor polynomials}

\author{S\"oren Berg}
\address{Institut f\"ur Mathematik, Technische Universit\"at Berlin, %
Germany}
\email{berg@math.tu-berlin.de}
\author{Katharina Jochemko}
\address{Department of Mathematics, %
Royal Institute of Technology, Stockholm, Sweden}
\email{jochemko@kth.se}
\author{Laura Silverstein}
\address{Institut f\"ur Diskrete Mathematik und Geometrie, %
Technische Universit\"at Wien, %
Austria}
\email{laura.silverstein@tuwien.ac.at}

\keywords{Ehrhart tensor polynomial, $h^r$-tensor polynomial, Pick's formula, positive semidefinite coefficients, half-open polytopes}
\subjclass[2010]{05A10, 05A15, 15A45, 15A69, 52B20, 52B45}

\date{\today}

\parskip=3pt

\begin{document}
\maketitle

\begin{abstract}
The notion of Ehrhart tensor polynomials, a natural generalization of the Ehrhart polynomial of a lattice polytope, was recently introduced by Ludwig and Silverstein. We initiate a study of their coefficients. In the vector and matrix cases, we give Pick-type formulas in terms of triangulations of a lattice polygon. As our main tool, we introduce $h^r$-tensor polynomials, extending the notion of the Ehrhart $h^\ast$-polynomial, and, for matrices, investigate their coefficients for positive semidefiniteness. In contrast to the usual $h^\ast$-polynomial, the coefficients are in general not monotone with respect to inclusion. Nevertheless, we are able to prove positive semidefiniteness in dimension two. Based on computational results, we conjecture positive semidefiniteness of the coefficients in higher dimensions. Furthermore, we generalize Hibi's palindromic theorem for reflexive polytopes to $h^r$-tensor polynomials and discuss possible future research directions.
\end{abstract}

\section{Introduction}
The Ehrhart polynomial of a lattice polytope counts the number of lattice points in its integer dilates and is arguably the most fundamental arithmetic invariant of a lattice polytope. It is a cornerstone of geometric combinatorics and appears in various guises in other areas of mathematics such as commutative algebra, optimization, representation theory, or voting theory (see, e.g., \cite{Barvinok,Berenstein,DeLoeraetal,Lepelley,MillerSturmfels}). Concepts from Ehrhart theory have been generalized in various directions; for example, $q$-analogs of Ehrhart polynomials \cite{chapoton2016q}, equivariant versions \cite{Stapledonequivariant}, multivariate extensions  \cite{beck2001multidimensional,bihan2016irrational,haase2015mixed}, and generalizations to valuations \cite{jochemkocombinatorial,jochemko2016combinatorial,McMullen77}. 

Recently, Ludwig and Silverstein \cite{LS} introduced Ehrhart tensor polynomials based on discrete moment tensors that were defined by B\"or\"oczky and Ludwig~\cite{boroczkyvaluations}. Let $\poly$ denote the family of convex polytopes with vertices in $\Z^d$ called \textbf{lattice polytopes} and let $\TT^r$ be the vector space of symmetric tensors of rank $r$ on $\R^d$. For $x,y \in \mathbb{R}^d$, we write $xy$ for $x\otimes y$. In particular, $x^r=x\otimes \cdots \otimes x$ and we set $x^0 :=1$. 

\goodbreak
The \textbf{discrete moment tensor of rank $r$} of a polytope $P\in\poly$ is
\begin{equation}\label{eq:discretemoment}
\OL^{r}(P) \ = \ \sum_{x\in P\cap\Z^d}x^{r} 
\end{equation}
where $r\in\N$ and $\N$ denotes the set of nonnegative integers.
Note that, for our convenience, this definition differs by a scalar from the original definition given in~\cite{boroczkyvaluations}. A version of $\OL^{r}(P)$, the discrete directional moment, was studied in~\cite{Schulz}.
For $r=0$, the usual \textbf{discrete volume} or \textbf{lattice point enumerator} $\OL(P):=\OL^0 (P) = |P\cap \Z^d|$ is recovered. For $r=1$, $\OL^1(P)$ equals the discrete moment vector defined in~\cite{boroczky2016minkowski}. Based on results by Khovanski{\u\i} and Pukhlikov~\cite{PK92} and Alesker\cite{Alesker98}, it was identified in~\cite{LS} that $\OL^r (nP)$ is given by a polynomial, for any $n\in\N$, extending Ehrhart's celebrated result for the lattice point enumerator~\cite{Ehrhart62}.

\begin{thm*}[{\cite[Theorem 1]{LS}}]
There exist $\OL _i ^r \colon \mathcal{P}(\Z ^d) \rightarrow \TT^r$ for all $1\leq i\leq d+r$ such that
\[
\OL ^r(nP) \ = \ \sum _{i=0}^{d+r} \OL _i ^r (P)n^i
\]
for any $n\in\N$ and $P\in \mathcal{P}(\Z ^d)$.
\end{thm*}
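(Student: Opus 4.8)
The plan is to reduce the tensor statement to scalar lattice-point sums with monomial weights and then to establish polynomiality together with the degree bound for those sums by a generating-function computation over simplicial cones. First I would fix the standard basis of $\R^d$ and observe that, for a multi-index $\alpha=(a_1,\dots,a_d)\in\N^d$ with $|\alpha|=r$, the $\alpha$-component of the symmetric tensor $x^r$ is the monomial $x^\alpha=x_1^{a_1}\cdots x_d^{a_d}$. Hence the corresponding component of $\OL^r(nP)$ is the weighted sum
\[
S_\alpha(P,n) \ := \ \sum_{x\in nP\cap\Z^d} x^\alpha,
\]
and since a map into $\TT^r$ is polynomial in $n$ exactly when each of its finitely many components is, it suffices to prove that each $S_\alpha(P,\cdot)$ is a polynomial in $n$ of degree at most $d+r=d+|\alpha|$.

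Next I would treat a lattice simplex $\Delta=\conv(v_0,\dots,v_d)$ via its homogenization. Setting $\hat v_i=(v_i,1)\in\Z^{d+1}$ and $C=\{\sum_i\lambda_i\hat v_i:\lambda_i\ge 0\}$, the lattice points of $C$ with last coordinate $n$ are exactly the lifts of $n\Delta\cap\Z^d$. Writing $z=(z_1,\dots,z_d)$, $t=z_{d+1}$, and letting $\bar w$ denote the first $d$ coordinates of $w$, the simplicial cone has the rational lattice-point generating function
\[
F(z,t) \ = \ \frac{\sum_{w\in\Pi\cap\Z^{d+1}} z^{\bar w}\,t^{w_{d+1}}}{\prod_{i=0}^{d}\bigl(1-z^{v_i}t\bigr)},
\]
where $\Pi$ is the half-open fundamental parallelepiped. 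Applying the Euler operators $\prod_{j}(z_j\partial_{z_j})^{a_j}$ multiplies the term indexed by $(x,n)$ by $x^\alpha$, and specializing $z=\mathbf{1}$ then collapses each denominator factor to $(1-t)$ and yields
\[
\sum_{n\ge 0} S_\alpha(\Delta,n)\,t^n \ = \ \frac{g_\alpha(t)}{(1-t)^{\,d+1+r}}
\]
for some polynomial $g_\alpha$. Because $\Delta$ is a lattice simplex the only pole is at $t=1$, so the coefficients form an honest polynomial in $n$ of degree at most $d+r$.

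To pass to an arbitrary $P\in\poly$ I would take a triangulation of $P$ into lattice simplices and make it half-open, so that the dilates $n\Delta_k$ with their inherited half-open facets partition $nP\cap\Z^d$ without overcounting. The weighted sum is additive under this partition, giving $S_\alpha(P,n)=\sum_k S_\alpha(\Delta_k,n)$; each summand is a polynomial of degree at most $d+r$ (the half-open modification changes only the numerator $g_\alpha$, not the denominator), and hence so is $S_\alpha(P,\cdot)$. Assembling the components recovers the claimed expansion $\OL^r(nP)=\sum_{i=0}^{d+r}\OL_i^r(P)\,n^i$ with each $\OL_i^r(P)\in\TT^r$.

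The step I expect to be the main obstacle is the precise bookkeeping of the pole order at $t=1$. Each application of an operator $z_j\partial_{z_j}$ to a factor $(1-z^{v_i}t)^{-1}$ raises its order by exactly one, so after the $r$ differentiations the order at $t=1$ is at most $(d+1)+r$; one must verify that specializing $z=\mathbf{1}$ introduces no spurious increase beyond this and handle the Laurent-series technicalities (the moment tensor is not translation invariant, so one cannot simply push $P$ into the positive orthant, and the manipulations are carried out on the rational function by analytic continuation). A cleaner alternative for the degree bound alone is to prove polynomiality by any means and then invoke the crude estimate $|S_\alpha(P,n)|\le(\max_{x\in P}\|x\|_\infty)^r\,n^r\,|nP\cap\Z^d|$, which is $O(n^{d+r})$ by Ehrhart's theorem for the lattice-point count; a polynomial with this growth has degree at most $d+r$.
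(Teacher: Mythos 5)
Your proposal is correct, but it takes a genuinely different route from the paper's own proof. The paper reproves this polynomiality result via Theorem~\ref{prop:h_cone} together with equation~\eqref{eq:halfopendec}: it works with the tensor $\OL^r$ directly, tiles the cone over a \emph{half-open} simplex by lattice translates of the half-open parallelepiped $\Pi_{S^\ast}$, expands $\OL^r(S_i+u_1\bar v_1+\cdots+u_{d+1}\bar v_{d+1})$ using translation covariance and the multinomial theorem, and evaluates the sums over the $u_i$ with the Eulerian identity $\sum_{n\geq 0}n^jt^n=A_j(t)/(1-t)^{j+1}$, producing an explicit numerator over $(1-t)^{d+r+1}$; general polytopes are then handled by the half-open decomposition of Theorem~\ref{thm:half-open decomposition}. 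You instead reduce to the scalar monomial sums $S_\alpha$ and apply Euler operators $\prod_j(z_j\partial_{z_j})^{a_j}$ to the multivariate lattice-point generating function of the cone over a \emph{closed} simplex before specializing at $z=\mathbf{1}$, pushing the half-open bookkeeping into the very last step. This is closer in spirit to the Khovanski{\u\i}--Pukhlikov approach \cite{PK92} on which the original proof in \cite{LS} builds, and it has the advantage of avoiding tensor algebra entirely: everything is a scalar generating-function computation. What the paper's route buys in exchange is the explicit formula of Theorem~\ref{prop:h_cone} for the numerator (the $h^r$-tensor polynomial of a half-open simplex), which is the workhorse for the Pick-type formulas of Section~\ref{sec:pick} and the positive semidefiniteness results of Section~\ref{sec:positivity}; your argument yields polynomiality and the degree bound but no comparably structured expression for the coefficients.

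One point you should nail down when writing this up: controlling the pole order at $t=1$ is not by itself enough, because a rational function $g_\alpha(t)/(1-t)^{d+r+1}$ only guarantees that the coefficients agree with a polynomial for $n\geq \deg g_\alpha-(d+r)$; to get the statement for \emph{all} $n\in\N$ you also need $\deg g_\alpha\leq d+r$. This does hold in your setup --- every differentiation that raises a denominator exponent simultaneously multiplies the corresponding numerator term by $z^{v_i}t$, so each term's numerator $t$-degree stays at least one below its pole order, and clearing to the common denominator $(1-t)^{d+r+1}$ preserves this --- but it is exactly the kind of bookkeeping you flagged, and it deserves an explicit sentence. (The paper sidesteps this issue because its numerator is manifestly the finite sum in Theorem~\ref{prop:h_cone}, whose $t$-degree is visibly at most $d+r$.)
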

\noindent The expansion of $\OL^r(nP)$ will be denoted as $\OL_P^r(n)$ and is called the \textbf{Ehrhart tensor polynomial} of $P$ in commemoration of this result. Furthermore, the coefficients $\OL^r_1,\dots,\OL^r_{d+r}$ are the \textbf{Ehrhart tensor coefficients} or \textbf{Ehrhart tensors}.

A fundamental and intensively studied question in Ehrhart theory is the characterization of Ehrhart polynomials and their coefficients. 
The only coefficients that are known to have explicit geometric descriptions are the leading, second-highest, and constant coefficients for the classic Ehrhart polynomial (see, e.g.,~\cite{ccd}). For the Ehrhart tensor polynomial, the leading and constant coefficients were given in~\cite{LS} and we give an interpretation for the second-highest coefficient (Proposition~\ref{prop:secondcoeff}) as the weighted sum of moment tensors over the facets of the polytope; the descriptions of all are given in Section~\ref{sec:discretemoment}. 

Conversely, for lattice polygons, the coefficients of the Ehrhart polynomial are positive and well-understood. They are given by Pick's Formula~\cite{Pick}. Let $\partial P$ denote the boundary of the polytope $P$, for any $P\in\poly$.

\begin{thm*}[Pick's Formula]
For any lattice polygon $P$, we have
\[
\OL(nP) \ = \ \OL _0(P)+\OL _1(P)n+\OL _2(P)n^2 \, 
\]
where $\OL _0(P)=1$, $\OL _1(P)=\tfrac{1}{2}\OL(\partial P)$, and $\OL _2(P)$ equals the area of $P$.
\end{thm*}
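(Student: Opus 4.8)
The plan is to apply the quoted Ludwig--Silverstein theorem with $r=0$, which is exactly Ehrhart's theorem and tells us that $\OL_P(n):=\OL(nP)$ agrees with a polynomial in $n$ of degree $d=2$. Thus $\OL(nP)=\OL_0(P)+\OL_1(P)n+\OL_2(P)n^2$, and it remains only to identify the three coefficients; I would determine the leading one by a volume comparison, the constant one by an Euler-characteristic argument, and the linear one---the genuine content of the formula---by reciprocity.

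For the leading coefficient I would compare $\OL(nP)$ with the Euclidean area $\vol(nP)=n^2\vol(P)$. Associating to each lattice point of $nP$ a unit cell shows that the two quantities differ only by a boundary term of order $n$, so dividing by $n^2$ and letting $n\to\infty$ gives $\OL_2(P)=\lim_{n\to\infty}\OL(nP)/n^2=\vol(P)$, the area of $P$. Equivalently, fixing a triangulation of $P$ into unimodular triangles, each of area $\tfrac12$, the leading coefficient is $\tfrac12$ times their number, which is again the area.

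For the constant coefficient I would use that the constant term of an Ehrhart polynomial equals the combinatorial Euler characteristic of the polytope, which is $1$ for any nonempty convex polytope. To keep this self-contained and in the spirit of the later sections, I would turn the triangulation above into a disjoint half-open decomposition $P=\bigsqcup_j\tilde T_j$ into half-open unimodular triangles; additivity of the lattice-point enumerator yields $\OL(nP)=\sum_j\OL(n\tilde T_j)$. A direct computation for the half-open unimodular triangles (of which there are four up to lattice symmetry, according to the number of removed edges) shows that the constant term of each equals its Euler characteristic, and these sum to $\chi(P)=1$, giving $\OL_0(P)=1$.

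The linear coefficient is the heart of the matter. First I would check that the boundary count is exactly linear: writing $\partial P$ as the union of its edges and counting lattice points on each dilated edge by its lattice length (the $\gcd$ of its primitive direction vector), with shared vertices counted once, gives $\OL(n\partial P)=n\,\OL(\partial P)$. Then, since $nP$ is the disjoint union of its interior and its boundary $n\partial P$, we have $|\mathrm{int}(nP)\cap\Z^2|=\OL(nP)-n\,\OL(\partial P)$; combining this with Ehrhart--Macdonald reciprocity $\OL_P(-n)=|\mathrm{int}(nP)\cap\Z^2|$ (valid since $\dim P=2$) produces the polynomial identity $\OL_P(-n)=\OL_P(n)-n\,\OL(\partial P)$. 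Comparing the coefficients of $n$ forces $-\OL_1(P)=\OL_1(P)-\OL(\partial P)$, that is $\OL_1(P)=\tfrac12\OL(\partial P)$. I expect this step to be the main obstacle: the leading and constant terms are comparatively soft, whereas isolating the $\tfrac12\OL(\partial P)$ term is precisely what makes Pick's formula nontrivial, and it requires a real input beyond mere polynomiality---either reciprocity as above, or a careful bookkeeping of the edges removed in the half-open triangulation.
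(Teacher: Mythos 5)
Your proof is correct, but note that the paper itself offers no proof of this statement: Pick's formula is quoted in the introduction as a classical result, cited to \cite{Pick}, and the paper's actual content is the rank $r=1,2$ analogues in Section~\ref{sec:pick}. Your argument is the standard Ehrhart-theoretic derivation and every step is sound: polynomiality from the $r=0$ case of the Ludwig--Silverstein theorem, the volume limit for $\OL_2$, a half-open (equivalently, Euler-characteristic) computation for $\OL_0$, and --- the genuinely nontrivial step, as you correctly identify --- Ehrhart--Macdonald reciprocity together with the exact linearity $\OL(n\partial P)=n\,\OL(\partial P)$ to force $-\OL_1(P)=\OL_1(P)-\OL(\partial P)$. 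It is worth contrasting this with how the paper proves its tensor analogues: there, one decomposes $P$ into half-open unimodular triangles via Theorem~\ref{thm:half-open decomposition}, computes the $h^r$-vector of each of the three local types $T_0,T_1,T_2$ (Figure~\ref{figure:threesimplices}), sums, and converts from the binomial basis back to the monomial basis. That route needs no reciprocity input for the linear coefficient; instead it trades it for combinatorial bookkeeping of how many triangles of each type occur, and it has the advantage of producing all coefficients, and indeed the full $h^\ast$-vector $(1,\#T_1,\#T_2)$ with Stanley nonnegativity, simultaneously --- which is why it scales to ranks $r\geq 1$, where reciprocity alone supplies too few relations to pin down all the tensor coefficients. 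Your reciprocity argument, conversely, is shorter and more classical but is specific to recovering the single symmetric relation among the coefficients.

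One small inaccuracy: in a half-open decomposition induced by a generic point, only three types of half-open triangles occur (zero, one, or two edges removed, exactly as in Figure~\ref{figure:threesimplices}), not four; a triangle with all three edges removed never arises, since at most two facets of a planar triangle are visible from a generic point. This does not damage your constant-term computation, since the triangle with three removed edges would also have constant term equal to its Euler characteristic, but the count of cases should be corrected.
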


In Section~\ref{sec:pick}, we determine Pick-type formulas for the discrete moment vector and matrix. Our interpretation of the coefficients is given with respect to a triangulation of the respective polygon. The principal tool we use to study Ehrhart tensor polynomials are \textbf{$h^r$-tensor polynomials} which encode the Ehrhart tensor polynomial in a certain binomial basis. Extending the notion of the usual Ehrhart $h^\ast$-polynomial, we consider
\begin{equation}
\OL^r(nP) \ = \ h^{r}_0 (P){n+d+r \choose d+r}+h^{r}_1 (P){n+d+r-1 \choose d+r}+ \ \cdots \ + h^{r}_{d+r} (P){n \choose d+r} \,
\end{equation}
for a $d$-dimensional lattice polytope $P$
and define the \textbf{$h^r$-tensor polynomial} of $P$ to be
\[
h^{r}_P (t)\ = \ \sum _{i=0}^{d+r}h^{r}_i(P)t^i.
\]

\goodbreak
We determine a formula for the $h^r$-tensor polynomial of half-open simplices (Theorem~\ref{prop:h_cone}) by using \textbf{half-open decompositions of polytopes}; an important tool which was introduced by K\"oppe and Verdoolaege~\cite{koppeverdoolaege}. From this formula and the existence of a unimodular triangulation, we deduce an interpretation of all Ehrhart vectors and matrices of lattice polygons.

Stanley's Nonnegativity Theorem \cite{RS80} is a foundational result which states that all coefficients of the $h^\ast$-polynomial of a lattice polytope are nonnegative. Stanley moreover proved that the coefficients are monotone with respect to inclusion; that is, for all lattice polytopes $Q \subseteq P$ and all $0\leq i\leq d$, it holds that $h^{\ast}_i(Q)\leq h^{\ast}_i(P)$. Using half-open decompositions, it was proven in \cite{jochemkocombinatorial} that, with regard to translation invariant valuations, monotonicity and nonnegativity are equivalent. 

In Section~\ref{sec:positivity}, we discuss notions of positivity for Ehrhart tensors and investigate Ehrhart tensor polynomials and $h^2$-tensor polynomials with respect to \textbf{positive semi\-definiteness}. In contrast to the usual Ehrhart polynomial, Ehrhart tensor coefficients can even be negative definite for lattice polygons (Example~\ref{ex:nonpositiveEhrharttensors}). Moreover, the coefficients of $h^2$-tensor polynomials are not monotone which is demonstrated by Example~\ref{ex:counter_ex_monotonicity}. Therefore, techniques such as irrational decompositions and half-open decompositions that have been used to prove Stanley's Nonnegativity Theorem (see \cite{ccd,jochemkocombinatorial}) can not immediately be applied to $h^2$-tensor coefficients. Nevertheless, considering an intricate decomposition of lattice points inside a polygon, we are able to prove positive semi-definiteness of the coefficients of $h^2$-tensor polynomial in dimension two (Theorem~\ref{thm:main}). We remark here that the theorem holds true for lattice polygons in a higher dimensional ambient space. Furthermore, all of the results given in this article are independent of the ambient space. Based on computational results, we further conjecture positive-semidefiniteness of the $h^2$-tensor coefficients in higher dimensions (Conjecture~\ref{conj:pos}).

In Section~\ref{sec:further}, we prove a generalization of Hibi's Palindromic Theorem~\cite{Hibi91} characterizing reflexive polytopes as having palindromic $h^r$-tensor polynomials for $r\in\N$ of even rank and conclude by discussing possible future research directions.

\goodbreak
\section{Discrete moment tensors}\label{sec:discretemoment}
We introduce some general notions we will use here yet assume basic knowledge of polyhedral geometry and, in particular, lattice polytopes. For further reference, we recommend~\cite{ccd,ziegler}.

\goodbreak
We work in $d$-dimensional Euclidean space, $\R^d$, equipped with the scalar product $u\cdot v$, for any $u,v\in\R^d$. The vector space of symmetric tensors $\TT^r$ is then canonically isomorphic to the space of multi-linear functionals from $(\R^d)^r$ to $\mathbb{R}$ that are invariant with respect to permutations of the arguments. We have $\TT^0=\R$ and can now identify $\TT^1$ with $\R^d$. Given the standard orthonormal basis $e_1,\dots,e_d$, any tensor $T\in\TT^r$ can be written uniquely as 
\[
T\ = \ \sum_{1\leq i_j\leq d}T_{i_1\dots i_r}e_{i_1}\otimes\dots\otimes e_{i_r}.
\]
For $r=2$, the bilinear form $T\in\TT^2$ can then be identified with a symmetric $d\times d$ matrix $T=(T_{ij})$. To that end, we will call the discrete moment tensor~(\ref{eq:discretemoment}) of ranks 1~and~2 the \textbf{discrete moment vector} and \textbf{discrete moment matrix}, respectively. We will also regard their associated coefficients, their Ehrhart tensors, as \textbf{Ehrhart vectors} and \textbf{Ehrhart matrices}.

\goodbreak
Prior to describing the known Ehrhart tensors, we provide some properties of the discrete moment tensor that we will need. Considering $\OL^r$ with respect to its coordinates, for any $P\in\poly$, gives
\begin{equation*}
  \OL^r(P) (e_{i_1}, \dots, e_{i_r})\ = \ \sum_{x\in P\cap\Z^n} (x\cdot e_{i_1}) \cdots (x\cdot e_{i_r}).
\end{equation*}
\noindent Hence the action of $\GL (\Z^d)$, the general linear group over the integers, on $\OL^r$ is observed to be
\begin{equation*}
  \OL^r(\phi P) (e_{i_1}, \dots, e_{i_r})\ = \ \OL^r(P) (\phi^t e_{i_1}, \dots, \phi^t e_{i_r})
\end{equation*}
for any $P\in\poly$ and $\phi\in\GL (\Z^d)$; we say that $\OL^r$ is \textbf{$\GL (\Z^d)$ equivariant}.

We let $P^o$ denote the relative interior of $P$ with respect to its affine hull, denoted by $\aff (P)$, and for any $P\in\poly$ and $r\in\N$ we set 
\[
\OL^r(P^o)\ := \ \sum _{x\in P^o} x^r.
\]
For the discrete volume, the Ehrhart-Macdonald reciprocity was a fundamental result in Ehrhart theory that was established by Ehrhart~\cite{Ehrhart62} and first proven by Macdonald~\cite{Macdonald71}. 

\begin{thm}{\cite{Ehrhart62,Macdonald71}}
If $P$ is a $d$-dimensional lattice polytope, then
\[
\OL(nP^o)\ = \ (-1)^d\OL_P(-n).
\]
\end{thm}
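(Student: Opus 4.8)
The plan is to prove the statement through the rational generating functions associated to the lattice-point cone over $P$, an approach that fits naturally with the half-open decompositions used elsewhere in the paper. First I would lift $P$ to height one in $\R^{d+1}$ by setting $\hat{P} = P \times \{1\}$ and passing to the cone $C = \{\lambda \hat{x} : \lambda \geq 0,\ \hat{x} \in \hat{P}\}$. The lattice points of $C$ with last coordinate $n$ are precisely those of $n\hat{P}$, so the generating function
\[
\sigma_C(t) \ = \ \sum_{m \in C \cap \Z^{d+1}} t^{m_{d+1}}
\]
has $\OL(nP)$ as the coefficient of $t^n$, while the corresponding series $\sigma_{C^o}(t)$ summed over the relative interior $C^o$ encodes $\OL(nP^o)$ for $n \geq 1$. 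The whole theorem then reduces to the functional equation $\sigma_{C^o}(t) = (-1)^{d+1}\,\sigma_C(1/t)$ of rational functions.

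To evaluate these series I would triangulate $P$ into lattice simplices and decompose $C$ correspondingly into simplicial cones. Invoking a half-open decomposition in the sense of K\"oppe and Verdoolaege, the lattice points of $C$ are partitioned as a disjoint union over these cones, each made half-open along the facets it shares with earlier cones, so that points on common faces are counted exactly once. For a single simplicial cone $C_j$ generated by lifted lattice vectors $w_0, \dots, w_d$, every lattice point is uniquely $p + \sum_i k_i w_i$ with $k_i \in \N$ and $p$ in the half-open parallelepiped $\Pi_j = \{\sum_i \lambda_i w_i : 0 \leq \lambda_i < 1\}$, whence
\[
\sigma_{C_j}(t) \ = \ \frac{\sum_{p \in \Pi_j \cap \Z^{d+1}} t^{p_{d+1}}}{\prod_{i=0}^d \bigl(1 - t^{(w_i)_{d+1}}\bigr)}.
\]

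The heart of the argument is a reciprocity bijection for a single cone. The involution $p \mapsto \bigl(\sum_{i=0}^d w_i\bigr) - p$ maps $\Pi_j$ bijectively onto the complementary half-open parallelepiped $\{\sum_i \lambda_i w_i : 0 < \lambda_i \leq 1\}$, which is exactly the domain enumerating the interior cone $C_j^o$. This identifies the numerator of $\sigma_{C_j^o}(t)$ with $t^{(\sum_i w_i)_{d+1}}$ times the numerator of $\sigma_{C_j}$ evaluated at $1/t$; applying the elementary identity $1 - t^{-a} = -t^{-a}(1 - t^{a})$ to each of the $d+1$ denominator factors, the power $t^{(\sum_i w_i)_{d+1}}$ cancels the factor $t^{-(\sum_i w_i)_{d+1}}$ it produces, and the $d+1$ sign factors combine into $(-1)^{d+1}$. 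This gives $\sigma_{C_j^o}(t) = (-1)^{d+1}\,\sigma_{C_j}(1/t)$ for each piece and hence, summing, for $C$ itself. Reading off the coefficient of $t^n$ — equivalently, using that $\OL_P$ is the polynomial interpolating these coefficients and that the cone has dimension $d+1$ — yields $\OL(nP^o) = (-1)^d\,\OL_P(-n)$.

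The step I expect to be the main obstacle is the combinatorial bookkeeping of the half-open decomposition rather than any individual algebraic manipulation: one must check that the half-open simplicial cones partition the lattice points of $C$ with neither omissions nor repetitions, and, more delicately, that the fully relatively open cone $C^o$ is recovered precisely as the matching disjoint union of the open pieces, so that $\sigma_{C^o}$ genuinely enumerates $\OL(nP^o)$ with no contribution gained or lost along the interior faces of the triangulation. Once this consistency is established, the single-cone bijection propagates termwise and the reciprocity follows formally.
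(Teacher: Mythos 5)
This statement is not proved in the paper at all: it is quoted as the classical Ehrhart--Macdonald reciprocity theorem, with citations to Ehrhart and Macdonald, so there is no internal proof to compare against. Your proposal is the standard coning/generating-function proof, and most of its ingredients are correct: the lift of $P$ to height one, the reduction of the theorem to the rational-function identity $\sigma_{C^o}(t)=(-1)^{d+1}\sigma_C(1/t)$, and the involution $p\mapsto\bigl(\sum_i w_i\bigr)-p$ on the fundamental parallelepiped of a single simplicial cone, which is exactly the kind of machinery the paper itself uses in Section 3 (the cones $C_{S^\ast}$ and parallelepipeds $\Pi_{S^\ast}$).

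The genuine gap is in the globalization step. You establish the per-piece reciprocity for the pair consisting of the \emph{closed} cone $C_j$ (fundamental domain $0\le\lambda_i<1$) and the \emph{fully open} cone $C_j^o$, and then conclude ``summing, for $C$ itself.'' But neither family of pieces partitions its target: $\sum_j\sigma_{C_j}(t)$ overcounts lattice points on walls shared by two or more cones, and $\sum_j\sigma_{C_j^o}(t)$ misses those points entirely, so neither sum equals $\sigma_C(t)$ or $\sigma_{C^o}(t)$. Correspondingly, the consistency check you flag at the end --- that $C^o$ is the disjoint union of the open pieces --- is simply false as stated: a lattice point on an interior face of the triangulation lies in $C^o$ but in no $C_j^o$. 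The repair is to prove the involution identity for the \emph{half-open} cones you set up but then did not use: if $H_j$ is $C_j$ with the facets indexed by $I_j$ removed, its fundamental domain has $\lambda_i\in(0,1]$ for $i\in I_j$ and $\lambda_i\in[0,1)$ otherwise, and the same involution swaps the two kinds of inequalities, giving $\sigma_{\check H_j}(t)=(-1)^{d+1}\sigma_{H_j}(1/t)$, where $\check H_j$ is $C_j$ with the \emph{complementary} set of facets removed. One then needs the duality lemma that for a generic point $q$ in the relative interior, removing the visible facets partitions $C$ into the $H_j$, while removing the invisible facets partitions $C^o$ into the $\check H_j$; only with that lemma is the summation legitimate. (Alternatively, keep your closed/open pairs and run inclusion--exclusion, i.e.\ induction on dimension over the faces of the triangulation, which is Stanley's original route.) As written, the final summation does not go through, and the missing lemma is precisely the nontrivial content of the half-open method.
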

\noindent A general version of this result was given for translation invariant valuations by McMullen~\cite{McMullen77}. Unlike the discrete volume, the discrete moment tensor varies under translations by elements in~$\Z^d$. More precisely, for all $r\in\N$, the discrete moment tensor of a translated polytope is
\[
\OL^{r}(P+t) \ = \ \sum _{j=0} ^r {r\choose j}\OL^{r-j}(P)t^j \, 
\]
and we say that the discrete moment tensor is covariant with respect to translations or \textbf{translation covariant}. A \textbf{unimodular transformation} of a polytope $P\in\poly$ is a $\GL (\Z^d)$ transformation of $P$ paired with a translation.

Similar to McMullen, a reciprocity theorem was given for translation covariant valuations in~\cite{LS}. Extending the classical Ehrhart-Macdonald reciprocity, the following reciprocity theorem gives the special case of the discrete moment tensor. 

\begin{thm}{\cite[Theorem 2]{LS}}\label{thm:tensorreciprocity}
Let $P$ be lattice polytope. Then 
\[
\OL ^r_P  (-n) \ = \ (-1)^{\dim (P)+r}\OL^r (nP^o) \, .
\]
\end{thm}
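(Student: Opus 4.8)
The plan is to deduce the tensor reciprocity from Stanley's reciprocity for rational cones, extracting the moment tensors from a multivariate generating function by applying differential operators. First I would homogenize: given a lattice polytope $P$ of dimension $k=\dim(P)$, I embed it at height one and form the cone $C=\operatorname{cone}(\{(v,1):v\in P\})\subseteq\R^{d+1}$ over $P\times\{1\}$, so that $C\cap(\Z^d\times\{n\})$ is in bijection with $nP\cap\Z^d$ and the relative interior $\relint(C)$ at height $n$ corresponds to $nP^o\cap\Z^d$. Writing $z^x=z_1^{x_1}\cdots z_d^{x_d}$ and introducing a height variable $w$, I would consider the two rational generating functions
\[
\sigma_C(z,w)=\sum_{n\geq 0}\Big(\sum_{x\in nP\cap\Z^d}z^x\Big)w^n,\qquad \sigma_{\relint(C)}(z,w)=\sum_{n\geq 1}\Big(\sum_{x\in nP^o\cap\Z^d}z^x\Big)w^n.
\]

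The key input is Stanley's reciprocity $\sigma_C(1/z,1/w)=(-1)^{k+1}\sigma_{\relint(C)}(z,w)$ as rational functions, where $k+1=\dim C$. I then recover the moment tensors by applying, for each multi-index $(i_1,\dots,i_r)$, the differential operator $D=\prod_{j=1}^r\bigl(z_{i_j}\partial_{z_{i_j}}\bigr)$ and evaluating at $z=(1,\dots,1)$: since $D\,z^x|_{z=1}=x_{i_1}\cdots x_{i_r}$, this turns the height-$n$ coefficient of $\sigma_C$ into the component $\OL^r(nP)(e_{i_1},\dots,e_{i_r})=\OL^r_P(n)(e_{i_1},\dots,e_{i_r})$, and likewise for $\sigma_{\relint(C)}$ with $nP^o$. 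The signs then bookkeep cleanly. Under $z\mapsto 1/z$ one has $z_i\partial_{z_i}z^{-x}=-x_i z^{-x}$, so applying $D$ to $\sigma_C(1/z,1/w)$ and setting $z=1$ produces a factor $(-1)^r$ and yields $(-1)^r\sum_{n\geq0}\OL^r_P(n)\,w^{-n}=(-1)^rG(1/w)$, where $G(w)=\sum_{n\geq0}\OL^r_P(n)w^n$ is rational because $\OL^r_P$ is a polynomial; on the right the same operator gives $(-1)^{k+1}\sum_{n\geq1}\OL^r(nP^o)w^n$. Combining with the elementary generating-function reciprocity $G(1/w)=-\sum_{n\geq1}\OL^r_P(-n)w^n$, valid for any polynomial as an identity of rational functions, and comparing coefficients of $w^n$ gives $(-1)^{r+1}\OL^r_P(-n)=(-1)^{k+1}\OL^r(nP^o)$, i.e. $\OL^r_P(-n)=(-1)^{k+r}\OL^r(nP^o)$, which is the claim since $(-1)^{k-r}=(-1)^{k+r}$.

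I expect the main obstacle to be establishing Stanley's cone reciprocity in the form above and handling the evaluation at $z=1$ rigorously. For the former I would use a half-open decomposition of $C$ into half-open simplicial cones, the tool highlighted in the introduction: reciprocity is a direct monomial computation for a single half-open simplicial cone, and summing over the decomposition, which partitions both $C$ and $\relint(C)$ with matching signs, yields the general statement. For the latter, the subtlety is that $\sigma_C$ converges only on a region whose closure meets $z=1$; I would either take the limit $z\to(1,\dots,1)$ of the rational function, noting that after applying $D$ and fixing a $w$-coefficient the apparent singularity cancels, or equivalently argue directly at the level of the polynomial coefficients $\OL^r_P(n)$ and $\OL^r(nP^o)$. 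Finally, for non-full-dimensional $P$ I would carry out the construction inside $\aff(P)$, so that $\dim C=\dim(P)+1$ and $P^o$ is the relative interior, while the ambient moment coordinates $z\in\R^d$ are unaffected; hence no extra work is needed and the result is independent of the ambient space, as claimed.
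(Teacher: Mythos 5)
Your proposal is correct, but be aware that this paper offers no proof to measure it against: Theorem~\ref{thm:tensorreciprocity} is imported from \cite[Theorem 2]{LS}, and the proof there is in the spirit of McMullen's valuation theory --- a reciprocity theorem for translation covariant polynomial valuations, of which the discrete moment tensor is a special case --- rather than a generating-function argument. Your route is genuinely different and it works: coning over $P\times\{1\}$, Stanley's reciprocity $\sigma_C(1/z,1/w)=(-1)^{\dim(P)+1}\sigma_{\relint(C)}(z,w)$, extraction of the rank-$r$ moments by the commuting operators $z_i\partial_{z_i}$, the elementary identity $G(1/w)=-\sum_{n\geq 1}p(-n)w^n$ for polynomials, and the resulting sign count $(-1)^{r+1}\OL^r_P(-n)=(-1)^{\dim(P)+1}\OL^r(nP^o)$ indeed yield the claim. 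Two dependencies should be made explicit. First, you need polynomiality of $\OL^r_P$ (both to know that $G$ is rational and for $\OL^r_P(-n)$ to be defined at all); that is \cite[Theorem 1]{LS}, which this paper also assumes before stating the present theorem, so the dependency is legitimate and not circular. Second, the specialization $z=1$ that you rightly flag is harmless here: the denominators of $\sigma_C(1/z,1/w)$ and of $\sigma_{\relint(C)}(z,w)$ restrict at $z=1$ to powers of $1-w^{-1}$ and of $1-w$ respectively, hence do not vanish identically, and the locus $z=1$ with $|w|>1$ (resp.\ $|w|<1$) lies in the domain of convergence of the corresponding series, so the identity of rational functions in $w$, and with it the coefficient comparison, survives the specialization. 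As for what each approach buys: the valuation-theoretic proof in \cite{LS} gives reciprocity for a much broader class of valuations at once, while your argument is elementary, self-contained modulo classical Ehrhart theory, handles the tensor structure transparently by differentiation, and rests on exactly the tools --- cones over simplices, half-open decompositions, generating functions as in Theorem~\ref{prop:h_cone} and Section~\ref{sec:half-open} --- that this paper develops; indeed your fallback derivation of Stanley reciprocity from half-open decompositions is the same device the authors use to reprove polynomiality.
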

\noindent We use this theorem in our characterization of the second-highest Ehrhart tensor.

A complete characterization of the Ehrhart coefficients has been inaccessible up to this point. The coefficients can even be negative and, therefore, are difficult to describe combinatorially. However, it is known that the leading coefficient equals the volume, the second highest coefficient is related to the normalized surface area, and the constant coefficient is always $1$. 

\goodbreak
More generally, for Ehrhart tensors, it has been proven \cite[Lemma 26]{LS} that the leading coefficient of the discrete moment tensor equals the \textbf{moment tensor of rank $r$} which is defined as
\[
\MM^r (P) \ = \ \int _P x^r \, \diff x \, .
\]
It is also clear that, for $r\geq 1$, the constant coefficient vanishes identically by its $\GL (\Z^d)$ equivariance; that is, $\OL_0^r(P)=\OL^r(0P)=0$ for any $P\in\poly$~\cite{LS}.

We give an interpretation for the second coefficient (Proposition~\ref{prop:secondcoeff}) as the weighted sum of moment tensors over the facets of the polytope. The coefficient $\OL_{d-1}(P)$, specifically, was shown to be equal to one half of the sum over the normalized volumes of the facets of $P$ by Ehrhart~\cite{Ehrhart67}. We extend this statement to Ehrhart tensor polynomials by proving the following. 

\begin{prop}\label{prop:secondcoeff}
Let $P$ be a lattice polytope. Then
$$
\OL^r_{\dim (P)+r-1}(P) \ = \ \sum _{F} \frac{1}{|\det (\aff (F)\cap \mathbb{Z}^d)|}\int _F x^r \, \diff x \, ,
$$
where the sum is over all facets $F \subset P$.
\end{prop}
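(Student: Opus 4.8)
The plan is to extract $\OL^r_{\dim(P)+r-1}(P)$ as the second-highest coefficient of the Ehrhart tensor polynomial by isolating the contribution of the boundary lattice points and evaluating it facet by facet. Writing $d=\dim(P)$, the principal tool is the tensor Ehrhart--Macdonald reciprocity of Theorem~\ref{thm:tensorreciprocity}, which gives $\OL^r(nP^o)=(-1)^{d+r}\OL^r_P(-n)$. Since both $\OL^r(nP)$ and $\OL^r(nP^o)$ are polynomial in $n$, so is the boundary moment tensor
\[
\OL^r\big(\partial(nP)\cap\Z^d\big)\ = \ \OL^r(nP)-\OL^r(nP^o)\ = \ \OL^r_P(n)-(-1)^{d+r}\OL^r_P(-n).
\]
The point is that the coefficient of $n^{d+r-1}$ in this boundary moment tensor encodes exactly the datum $\OL^r_{d+r-1}(P)$ we wish to describe, whereas its lower-order coefficients are irrelevant to us.

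First I would decompose the boundary into the relative interiors of its proper faces, $\partial(nP)\cap\Z^d=\bigsqcup_{G\subsetneq P}\big(\relint(nG)\cap\Z^d\big)$. Because every face of codimension at least two has dimension at most $d-2$, its discrete moment tensor is polynomial of degree at most $d-2+r$ and therefore cannot affect the coefficient of $n^{d+r-1}$. Consequently, up to terms of strictly lower order, $\OL^r(\partial(nP)\cap\Z^d)=\sum_F \OL^r(\relint(nF)\cap\Z^d)$, the sum being over the facets $F$ of $P$.

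Next I would evaluate each facet term. Working inside the affine hull $\aff(F)$ with its induced affine lattice $\aff(F)\cap\Z^d$, the count $\OL^r(\relint(nF)\cap\Z^d)$ is polynomial in $n$ of degree $\dim(F)+r=d+r-1$. By the lower-dimensional, lattice-density version of the leading-coefficient statement \cite[Lemma 26]{LS}, its leading coefficient equals $\frac{1}{|\det(\aff(F)\cap\Z^d)|}\int_F x^r\,\diff x$: the dilation scaling gives $\int_{\relint(nF)}x^r\,\diff x=n^{d+r-1}\int_F x^r\,\diff x$, and the factor $1/|\det(\aff(F)\cap\Z^d)|$ is precisely the density of $\Z^d$ in $\aff(F)$ relative to the induced Lebesgue measure, which converts the integral into the discrete moment tensor. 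Summing over facets identifies $\sum_F \frac{1}{|\det(\aff(F)\cap\Z^d)|}\int_F x^r\,\diff x$ as the coefficient of $n^{d+r-1}$ of the boundary moment tensor, and comparison with the reciprocity identity above then yields the claimed formula for $\OL^r_{\dim(P)+r-1}(P)$.

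The main obstacle is the careful bookkeeping of the normalization constant. One must combine the structure of the reciprocity identity with the induced-lattice densities $1/|\det(\aff(F)\cap\Z^d)|$ and the dilation factor $n^{d+r-1}$ so that the facet sum emerges with exactly the constant asserted in the statement, and one must confirm rigorously that neither the faces of codimension at least two nor the relative-interior-versus-closure discrepancy on individual facets contaminate the coefficient of $n^{d+r-1}$. Pinning down this constant precisely---rather than the polynomial degrees or the facet integrals, which are routine---is where the argument must be executed with the greatest care.
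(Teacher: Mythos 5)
Your plan is in essence the paper's own proof: both arguments apply Theorem~\ref{thm:tensorreciprocity} to write the boundary tensor as $\OL^r(nP)-\OL^r(nP^o)=\OL^r_P(n)-(-1)^{\dim(P)+r}\OL^r_P(-n)$, decompose $\partial(nP)$ into the relative interiors of proper faces (using reciprocity again, face by face, to get polynomiality of $\OL^r(\relint(nF)\cap\Z^d)$), discard faces of codimension at least two for degree reasons, and evaluate each facet's leading coefficient as $\frac{1}{|\det(\aff(F)\cap\Z^d)|}\int_F x^r\,\diff x$ via the density-weighted version of the leading-coefficient lemma of Ludwig--Silverstein. The only cosmetic difference is that the paper extracts the top coefficient by the limit $\lim_{n\to\infty}n^{-(\dim(P)+r-1)}(\cdot)$ rather than by comparing polynomial coefficients.

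The genuine gap is the constant you explicitly defer and then assert without computing. Carrying out your own comparison with $d=\dim(P)$: writing $\OL^r_P(n)=\sum_i\OL^r_i(P)n^i$, the coefficient of $n^{d+r-1}$ in $\OL^r_P(n)-(-1)^{d+r}\OL^r_P(-n)$ is
\[
\bigl(1-(-1)^{d+r}(-1)^{d+r-1}\bigr)\OL^r_{d+r-1}(P)\ =\ 2\,\OL^r_{d+r-1}(P),
\]
since the terms of degree congruent to $d+r-1$ modulo $2$ double while the others cancel. Hence your facet-by-facet evaluation yields
\[
2\,\OL^r_{d+r-1}(P)\ =\ \sum_F\frac{1}{|\det(\aff(F)\cap\Z^d)|}\int_F x^r\,\diff x,
\]
that is, the second-highest Ehrhart tensor is \emph{one half} of the facet sum. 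This factor $\tfrac12$ is exactly what the paper's proof derives (its final line reads $\tfrac12\sum_{F\text{ facet}}\cdots$), and it is forced by the classical case $r=0$, where $\OL_{d-1}(P)$ equals one half of the normalized surface area, as the paper itself recalls before the proposition; the displayed statement of Proposition~\ref{prop:secondcoeff} is evidently missing the factor $\tfrac12$. So your claim that the comparison ``yields the claimed formula'' is precisely where the unexecuted bookkeeping hides a factor of $2$: your method is sound and, executed faithfully, reproduces the paper's proof, but it proves the formula with the $\tfrac12$, not the statement as printed.
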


\goodbreak
\begin{proof}
Theorem~\ref{thm:tensorreciprocity}, on the one hand, implies
\[
\sum _{x\in \partial nP} x^r \ = \ \sum _{F\subsetneq P} \sum _{x\in nF^o} x^r \ = \ \sum _{F\subsetneq P} (-1)^{\dim (F) +r}\OL ^r_F (-n) \, ,
\]
where the sum is taken over all proper faces $F\subsetneq P$. On the other hand, we have
\begin{align*}
\sum _{x\in \partial nP} x^r\  &= \ \OL^r (nP)-\OL^r (nP^o)
\ = \ \OL^r (nP)-(-1)^{\dim (P) +r}\OL^r_P  (-n)\\
&= \ 2\sum _{i\geq 0}\OL^r _{\dim (P) +r -1-2i} (nP) \, 
\end{align*}
where we set $\OL^r_i=0$ for all $i<0$. Using both equations, we obtain
\begin{align*}
\OL ^r _{\dim (P) +r-1} (P)\ &= \ \lim _{n\to \infty} \frac{1}{n^{\dim (P) +r -1}} \sum _{i\geq 0}\OL^r _{\dim (P) +r -1-2i} (nP)\\
&= \ \frac{1}{2}\sum _{F\subsetneq P} (-1)^{\dim (F) +r}\lim _{n\to \infty} \frac{1}{n^{\dim (P) +r -1}}\OL ^r_F  (-n)\\
&= \ \frac{1}{2}\sum _{F \text{ facet}} \frac{1}{|\det (\aff (F)\cap \mathbb{Z}^d)|}\int _F x^r\, \diff x \, ,
\end{align*}
where the last equality follows from~\cite{LS}.
\end{proof}

\section{$h^r$-tensor polynomials}\label{chapter:h^r}
Let $P$ be a $d$-dimensional lattice polytope. Since $\OL ^r(nP)$ is a polynomial of degree at most $d+r$, it can be written as a linear combination of the polynomials ${n+d+r \choose d+r}, {n+d+r-1 \choose d+r},\ldots,{n \choose d+r}$, that is,
\begin{equation}\label{eq:expansion}
\OL^r(nP) \ = \ h^{r}_0 (P){n+d+r \choose d+r}+h^{r}_1 (P){n+d+r-1 \choose d+r}+ \ \cdots \ + h^{r}_{d+r} (P){n \choose d+r} \, 
\end{equation}
for some $h^{r}_0(P),\ldots,h^{r}_{d+r}(P)\in \mathbb{T}^r$. Equivalently, in terms of generating functions,
\begin{equation}\label{eq:h_cone}
\sum _{n\geq 0} \OL^r (nP)t^n \ = \ \frac{h^{r}_0(P)+h^r_1(P)t+\dots+h^r_{d+r}(P)t^{d+r}}{(1-t)^{d+r+1}} \, .
\end{equation}
We call $h^{r}(P)=(h^{r}_0(P),h^{r}_1(P),\ldots,h^{r}_{d+r}(P))$ the \textbf{$h^r$-vector}, its entries the \textbf{$h^r$-tensor coefficients} or \textbf{$h^r$-tensors} of $P$, and 
\[
h^{r}_P (t)\ = \ \sum _{i=0}^{d+r}h^{r}_it^i
\]
the \textbf{$h^r$-tensor polynomial} of $P$. Observe that for $r=0$ we obtain the usual $h^\ast$-polynomial and $h^\ast$-vector of an Ehrhart polynomial. By evaluating equation~\eqref{eq:expansion} at $n=0,1$, we obtain $h_0^{r}=0$ for $r\geq 1$ and $h_1^{r}=\OL^r (P)$ for $r\geq 0$. Inspecting the leading coefficient, we obtain
\[
h_1^{r}(P)+h_2^{r}(P)+\ldots+h_{d+r}^{r}(P)\ = \ (d+r)!\int_Px^r\mathrm{d}x \, .
\]
Applying Theorem~\ref{thm:tensorreciprocity} and evaluating at $n=1$, we obtain
\[
h^r_{d+r} (P)\ = \ \OL^r (P^o) \, .
\]
\subsection{Half-open polytopes}
We will not only consider relatively open polytopes, but also \textbf{half-open polytopes}. Let $P$ be a polytope with facets $F_1,\ldots,F_k$ and let $q$ be a generic point in its affine span $\aff (P)$. Then a facet $F_i$ is \textbf{visible} from $q$ if $(p,q]\cap P = \emptyset$ for all $p\in F$. If $I_q (P)=\{i\in [k] \colon F_i \text{ is visible from }q\}$ then the point set
\[
H_q (P) \ = \ P\setminus \bigcup_{i \in I_q (P)} F_i
\]
defines a half-open polytope. In particular, $H_q (P)=P$ for all $q\in P$. The following result by K\"oppe and Verdoolaege~\cite{koppeverdoolaege} shows that every polytope can be decomposed into half-open polytopes, and is implicitely also contained in works by Stanley and Ehrhart (see~\cite{RS74}).
\begin{thm}[\cite{koppeverdoolaege}]\label{thm:half-open decomposition}
Let $P$ be a polytope and let $P_1,\ldots,P_m$ be the maximal cells of a triangulation of $P$. Let $q\in \aff (P)$ be a generic point. Then
\[
H_q (P) \ = \ H_q (P_1)\sqcup H_q (P_2)\sqcup \cdots \sqcup H_q (P_m)
\]
is a partition.
\end{thm}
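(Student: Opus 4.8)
The plan is to reduce the whole statement to a single pointwise membership criterion for half-open polytopes, and then to use the genericity of $q$ to show that each point of $H_q(P)$ is claimed by exactly one cell. Throughout I would work inside $\aff(P)$, so that $P$ and each maximal cell $P_i$ are full-dimensional, and I take $q$ to be generic in the sense that it lies on none of the finitely many affine hyperplanes spanned by $(d-1)$-dimensional faces of the triangulation $P_1,\dots,P_m$ (these include the facet hyperplanes of $P$ itself). For $x$ and small $\epsilon>0$ I abbreviate $x_\epsilon = (1-\epsilon)x+\epsilon q$.

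First I would establish the key criterion: for any polytope $Q$ that is full-dimensional in $\aff(P)$ and any $q$ off the facet hyperplanes of $Q$,
\[
x\in H_q(Q)\quad\Longleftrightarrow\quad x\in Q \text{ and } x_\epsilon\in Q \text{ for all sufficiently small } \epsilon>0.
\]
To prove this, write $Q=\{y:\langle a_j,y\rangle\le b_j\}$ with facets $F_j=Q\cap\{\langle a_j,\cdot\rangle=b_j\}$. A short computation along the segment $p+s(q-p)$ shows that $F_j$ is visible from $q$ exactly when $\langle a_j,q\rangle>b_j$, i.e.\ when $q$ lies strictly beyond the $j$-th facet hyperplane. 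Moving from $x$ toward $q$ then violates precisely the constraints of the visible facets through $x$: for $F_j$ with $x\in F_j$ one has $\langle a_j,x_\epsilon\rangle=b_j+\epsilon(\langle a_j,q\rangle-b_j)$, which stays $\le b_j$ for small $\epsilon$ if and only if $F_j$ is not visible, while constraints with $x\notin F_j$ are preserved by continuity. Hence $x_\epsilon\in Q$ for small $\epsilon$ if and only if $x$ lies on no visible facet, which is the definition of $H_q(Q)$.

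With this criterion in hand the inclusion $\bigsqcup_i H_q(P_i)\subseteq H_q(P)$ is immediate: if $x\in H_q(P_i)$ then $x\in P_i\subseteq P$ and $x_\epsilon\in P_i\subseteq P$ for small $\epsilon$, so $x\in H_q(P)$. For the reverse inclusion together with disjointness, I would fix $x\in H_q(P)$, so that $x_\epsilon\in P$ for all small $\epsilon>0$. Because $q$ avoids every affine hull of a $(d-1)$-face of the triangulation, the perturbed point $x_\epsilon$ avoids all of these hyperplanes for $0<\epsilon<\epsilon_1$; since the relative boundaries of the cells lie in this union of hyperplanes, the connected arc $\{x_\epsilon:0<\epsilon<\epsilon_1\}$ lies in $\relint(P_{i_0})$ for a unique maximal cell $P_{i_0}$. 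Letting $\epsilon\to0$ gives $x\in P_{i_0}$, and the criterion applied to $Q=P_{i_0}$ yields $x\in H_q(P_{i_0})$, proving coverage. If also $x\in H_q(P_j)$ then $x_\epsilon\in P_j$ for small $\epsilon$; as $x_\epsilon$ lies on no cell facet hyperplane it must lie in $\relint(P_j)$, forcing $P_j=P_{i_0}$ since distinct maximal cells have disjoint relative interiors. This establishes the partition.

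The main obstacle, and the one place genericity is genuinely used, is the step guaranteeing that the perturbation $x_\epsilon$ lands in the relative interior of a single maximal cell. The delicate case is when $x$ lies on a low-dimensional face of the triangulation shared by several cells: there the set-theoretic incidence alone does not single out a cell, and one must verify that the direction $q-x$ is transverse to every cell facet hyperplane through $x$. This is exactly what genericity of $q$ supplies, since $x_\epsilon$ remains on such a hyperplane $H$ for all small $\epsilon$ if and only if $q\in H$.
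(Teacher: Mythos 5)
Your proof is correct, but there is nothing in the paper to compare it against: the paper does not prove this statement at all; it is quoted verbatim from K\"oppe and Verdoolaege (and attributed implicitly also to Stanley and Ehrhart), so the result enters the paper as an external citation. Your argument is a self-contained reconstruction of essentially the original one. The heart of it, the criterion
\[
x\in H_q(Q)\quad\Longleftrightarrow\quad x\in Q \text{ and } (1-\epsilon)x+\epsilon q\in Q \text{ for all sufficiently small } \epsilon>0,
\]
is precisely the characterization of half-open polytopes used by K\"oppe and Verdoolaege, and the completion via connectedness of the arc $\{x_\epsilon\}$ inside the complement of the finitely many face hyperplanes is the standard way to get both coverage and disjointness at once. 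Two details are worth making explicit if you write this up: first, in the visibility computation the direction ``$\langle a_j,q\rangle<b_j$ implies $F_j$ not visible'' needs $p$ chosen in the relative interior of $F_j$ together with an irredundant inequality description of $Q$, since otherwise the claim that all other constraints are strict at $p$ (and hence preserved for small $s$) fails. Second, your reading of ``generic'' --- that $q$ must avoid the affine hulls of \emph{all} $(d-1)$-dimensional faces of the triangulation, not merely the facet hyperplanes of $P$ --- is exactly the right one and is genuinely necessary: if $q$ lies on the affine hull of an interior $(d-1)$-face shared by two cells, that face is visible from $q$ in neither cell and the claimed partition fails by double counting.
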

The discrete moment tensor naturally can be defined for half-open polytopes by setting
\[
\OL^r (H_q (P)) \ := \ \OL^r(P) - \sum _{J\subseteq I_q(P)} (-1)^{\dim P -\dim F_J} \OL^r (F_J) \, 
\]
where $F_J := \bigcap _{i \in J} F_i$. Then, from Theorem \ref{thm:half-open decomposition} and the inclusion-exclusion principle, we obtain that  
\begin{equation}\label{eq:halfopendec}
\OL^r (P) \ = \ \OL^r (H_q (P_1)) \ + \ \OL^r (H_q (P_2)) \ + \ \cdots \ + \  \OL^r (H_q (P_m)) \, 
\end{equation}
(Compare also~\cite[Corollary 3.2]{jochemkocombinatorial}).
\subsection{Half-open simplices}\label{sec:half-open}
Let $S$ be a $d$-dimensional lattice simplex with vertices $v_1,\ldots, v_{d+1}$. Let $F_1,\ldots, F_{d+1}$ denote the facets of $S$ such that $v_i \not \in F_i$. Let $S^{\ast}=H_q (S)$ be a $d$-dimensional half-open simplex and let $I=I_q (S)$. We define the half-open polyhedral cone
\[
C_{S^{\ast}} \ = \ \left\{ \sum _{i=1}^{d+1} \lambda _i \bar{v}_i : \lambda _i \geq 0\text{ for }i\in[d+1],\lambda_i\neq 0 \text{ if } i\in I\right\} \subseteq \mathbb{R}^{d+1}
\]
where $\bar{v}_i := (v_i, 1)\in\R^{d+1}$ for all $1\leq i\leq d+1$. Then, by identifying hyperplanes of the form $\{x\in \mathbb{R}^{d+1}\colon x_{d+1}=n\}$ with $\mathbb{R}^{d}$ via $p\colon \mathbb{R}^{d+1}\rightarrow \mathbb{R}^d$ which maps $x\mapsto (x_1,\ldots, x_d)$, we have $C_{S^{\ast}} \cap \{x_{d+1}=n\} = nS^{\ast}$. We consider the half-open parallelepiped
\[
\Pi _{S^{\ast}} \ = \ \left\{ \sum _{i=1}^{d+1} \lambda _i \bar{v}_i : 0< \lambda _i \leq 1 \text{ if } i\in I, 0\leq \lambda _i <1 \text{ if } i\not\in I\right\} \, .
\]
Then 
\[
C_{S^{\ast}} \ = \ \bigsqcup _{u\in \mathbb{Z}^{d+1}}\Pi _{S^{\ast}} +u_1\bar{v}_1+\cdots +u_{d+1}\bar{v}_{d+1} \, .
\]
Let $S_i = \Pi _{S^{\ast}} \cap \{x_{d+1}=i\}$. Then $S_i$ is a \textit{partially open} hypersimplex; that is, a hypersimplex with certain facets removed.

\goodbreak
Our next result shows that $\OL^r (nS^{\ast})$ is given by a polynomial in $n$ by determining its generating series. We follow the line of argumentation in~\cite[Proposition 3.3]{jochemkocombinatorial}. Observe that, together with equation \eqref{eq:halfopendec}, this reproves the polynomiality result of $\OL^r(nP)$.

\begin{thm}\label{prop:h_cone}
With the notation given above, the equation
\[
\sum _{n\geq 0} \OL^r (nS^{\ast}) t^n =  \sum _{k_0,\ldots, k_{d+1} \geq 0 \atop \sum k_j =r} {r\choose k_0,\ldots,k_{d+1}}v_1^{k_1}\cdots v_{d+1}^{k_{d+1}} \frac{(1-t)^{k _0}A_{k_1}(t)\cdots A_{k_{d+1}}(t)}{(1-t)^{d+r+1}}\sum _{i=0}^d\OL ^{k_0}(S_i)t^i \, ,
\]
holds true where $A_j(t)$ is the $j$-th Eulerian polynomial.
\end{thm}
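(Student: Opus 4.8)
The plan is to compute the generating series $\sum_{n\geq 0}\OL^r(nS^\ast)t^n$ directly from the cone decomposition already set up, transferring the counting into the half-open cone $C_{S^\ast}$ and then summing the tensor monomials over its lattice points. The key observation is that $C_{S^\ast}\cap\{x_{d+1}=n\}=nS^\ast$, so if $x\in nS^\ast$ corresponds to a lattice point $\bar x=(x,n)\in C_{S^\ast}\cap\Z^{d+1}$, then $x=p(\bar x)$ and we may expand the rank-$r$ tensor power using the tiling $C_{S^\ast}=\bigsqcup_{u\in\Z^{d+1}_{\geq 0}}\Pi_{S^\ast}+\sum_j u_j\bar v_j$. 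Every lattice point of the cone is uniquely $z+\sum_{j=1}^{d+1}u_j\bar v_j$ with $z\in\Pi_{S^\ast}\cap\Z^{d+1}$ and $u\in\Z_{\geq 0}^{d+1}$; projecting, its first $d$ coordinates are $p(z)+\sum_{j=1}^{d+1}u_j v_j$.

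First I would substitute this expression into $x^r=\bigl(p(z)+\sum_j u_j v_j\bigr)^{\otimes r}$ and apply the multinomial theorem for symmetric tensor powers, obtaining a sum over compositions $(k_0,k_1,\dots,k_{d+1})$ with $\sum k_j=r$ of $\binom{r}{k_0,\dots,k_{d+1}}\,p(z)^{k_0}\,(u_1 v_1)^{k_1}\cdots(u_{d+1}v_{d+1})^{k_{d+1}}$, where the index $k_0$ tracks the contribution of the base point $p(z)$ and each $k_j$ the contribution of the $j$-th generator $v_j$. Pulling the fixed vectors $v_1^{k_1}\cdots v_{d+1}^{k_{d+1}}$ out of the tensor (they do not depend on $u$ or $z$), the problem factors into (i) a sum over $z\in\Pi_{S^\ast}\cap\Z^{d+1}$ weighted by the scalar power of the height coordinate and the tensor $p(z)^{k_0}$, and (ii) a product of independent geometric-type sums over each $u_j\geq 0$ of $u_j^{k_j}t^{u_j}$.

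Next I would evaluate each piece. The sum $\sum_{u_j\geq 0}u_j^{k_j}t^{u_j}$ is the classical identity giving $A_{k_j}(t)/(1-t)^{k_j+1}$, where $A_{k_j}$ is the $k_j$-th Eulerian polynomial; this produces the factor $A_{k_1}(t)\cdots A_{k_{d+1}}(t)$ together with the denominator $(1-t)^{\sum_{j\geq 1}(k_j+1)}=(1-t)^{(r-k_0)+(d+1)}$. Grouping $z$ by its height $x_{d+1}=i$, which ranges over $0,\dots,d$ since $\Pi_{S^\ast}$ spans heights in that range, the partial sum over $\Pi_{S^\ast}$ becomes $\sum_{i=0}^d\bigl(\sum_{z\in S_i}p(z)^{k_0}\bigr)t^i=\sum_{i=0}^d\OL^{k_0}(S_i)t^i$, where $S_i=\Pi_{S^\ast}\cap\{x_{d+1}=i\}$ and the tensor of height $i$ points contributes the extra factor $t^i$ from the geometric bookkeeping. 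Finally I would assemble the denominators: the $(1-t)^{k_0}$ factor in the stated formula exactly corrects the exponent so that the total denominator is $(1-t)^{d+r+1}$, matching the shape of \eqref{eq:h_cone}.

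The main obstacle I anticipate is the careful bookkeeping of the height coordinate. Because $p$ forgets the last coordinate but the tensor power is taken of $p(z)+\sum u_j v_j$ rather than of the full $\Z^{d+1}$ point, I must check that the decomposition of the height variable $x_{d+1}=i+\sum_j u_j$ (each $\bar v_j$ contributes $1$ to the height) distributes correctly between the $z$-sum and the $u$-sums, and that the resulting shift in the exponent of $(1-t)$ is precisely accounted for by the explicit $(1-t)^{k_0}$ factor and the choice of weighting $t^i$. A secondary point requiring care is justifying the interchange of the (convergent, for $|t|<1$) infinite sum over $u$ with the finite tensor expansion, which is routine but should be stated. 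Once the height accounting is pinned down, the Eulerian-polynomial identity and the multinomial expansion give the claimed closed form immediately.
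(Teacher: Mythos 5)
Your proposal is correct and follows essentially the same route as the paper: tiling the cone $C_{S^{\ast}}$ by translates of the half-open parallelepiped $\Pi_{S^{\ast}}$, expanding via the multinomial theorem with $k_0$ tracking the base-point contribution, and evaluating the sums $\sum_{u_j\geq 0}u_j^{k_j}t^{u_j}$ with the Eulerian polynomial identity. The only cosmetic difference is that you expand each lattice point $p(z)+\sum_j u_j v_j$ directly, whereas the paper first invokes translation covariance of $\OL^r$ and then the multinomial theorem --- the two computations are identical.
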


\goodbreak
\begin{proof}
The generating function of the discrete moment tensor allows us to consider the discrete moment tensor of $nS^{\ast}$ by cutting the cone $C_{S^{\ast}}$ with the hyperplane $\{x_{d+1}=n\}$. The geometric interpretation of the half-open parallelepipeds tiling the cone, the translation covariance of the discrete moment tensor, and the binomial theorem together yield the equation
\begin{align*}
\sum _{n\geq 0} &\OL ^r(nS^{\ast})t^n  =\sum _{i=0}^d t^i \sum _{u_1,\ldots, u_{d+1} \geq 0} \OL ^r (S_i +u_1\bar{v}_1+\cdots +u_{d+1}\bar{v}_{d+1})t^{u_1+\cdots +u_{d+1}}\\
&=\sum _{i=0}^d t^i \sum _{u_1,\ldots, u_{d+1} \geq 0}\sum _{j=0}^r{r\choose j}\OL ^{r-j}(S_i) (u_1\bar{v}_1+\cdots +u_{d+1}\bar{v}_{d+1})^jt^{u_1+\cdots +u_{d+1}}\\
&=\sum _{i=0}^d t^i \sum _{u_1,\ldots, u_{d+1} \geq 0}\sum _{k_0,\dots, k_{d+1}\geq 0\atop \sum k_j =r} {r\choose k_0,\ldots,k_{d+1}}\OL ^{k_0}(S_i) (u_1\bar{v}_1)^{k_1}\cdots (u_{d+1}\bar{v}_{d+1})^{k_{d+1}}t^{u_1+\cdots +u_{d+1}}\\
&=\sum _{i=0}^d t^i \sum _{k_0,\dots, k_{d+1}\geq 0\atop \sum k_j =r} {r\choose k_0,\ldots,k_{d+1}}\OL ^{k_0}(S_i) \bar{v}_1^{k_1}\cdots \bar{v}_{d+1}^{k_{d+1}}\sum _{u_1,\ldots, u_{d+1} \geq 0}u_1^{k_1}\cdots u_{d+1}^{k_{d+1}}t^{u_1+\cdots +u_{d+1}}
\end{align*}
from which the result follows since
\[
\sum _{n\geq 0}n^jt^n \ = \ \frac{A_j(t)}{(1-t)^{j+1}} \, ,
\]
a known identity of generating functions (see, e.g.,~\cite{ccd}).
\end{proof}

We remark that the results and proofs of this section immediately carry over to general translative polynomial valuations (see~\cite{LS} for a definition). In particular,  Theorem~\ref{prop:h_cone} can be generalized to give a new proof of \cite[Corollary 5]{PK92}.

\goodbreak
\section{Pick-type formulas} \label{sec:pick} 
Pick's Theorem~\cite{Pick} gives an interpretation for the coefficients of the Ehrhart polynomial of a lattice polygon which establishes a relationship between the area of the polygon, the number of lattice points in the polygon and on its boundary. An analogue in higher dimensions can not exist (see, e.g.,~\cite{gruber}) as it is crucial that every polygon in dimension two has a unimodular triangulation; that is, a triangulation into simplices of minimal possible area $1/d!$. We offer interpretations for the coefficients of the Ehrhart tensor polynomial in the vector and the matrix cases by taking the route over the $h^r$-tensor polynomial. 

\goodbreak
Given a polygon $P\in\polyt$, we will consider unimodular triangulations of $P$ where such a triangulation will always be denoted by $\mathcal{T}$. The triangulation will be described by the edge graph $G=(V,E)$ of $\mathcal{T}$ where $V$ are the lattice points contained in $P$ and $E$ the edges of $\mathcal{T}$. Furthermore, the notation $x$ will be reserved for elements of $V$ and $y,z$ for endpoints of the edge $\{y,z\}\in E$. We define $V^o=P^o\cap\Z^2$, $\partial V=\partial P\cap\Z^2$, $E^o=\{\{y,z\}\in E : (y,z)\not\subset\partial P\} $, and $\partial E=\{\{y,z\}\in E : (y,z)\subset\partial P\} $.

\begin{figure}
    \begin{tikzpicture}
    \clip (-0.5,-0.65) rectangle (8,2);

    \coordinate (v_one) at (0,0);
    \coordinate (v_two) at (1,0);
    \coordinate (v_three) at (0,1);
    \draw [thin, fill=gray, fill opacity=0.2] (v_two) -- (v_one) -- (v_three) -- (v_two);
    \draw (v_one) node [below left] {$v_1$};
    \draw (v_two) node [below right] {$v_2$};
    \draw (v_three) node [above left] {$v_3$};
    \node[] at (0.5,-0.5) {$T_0$};

    \coordinate (v_one) at (3,0);
    \coordinate (v_two) at (4,0);
    \coordinate (v_three) at (3,1);
    \draw [thin, fill=gray, fill opacity=0.2] (v_two) -- (v_one) -- (v_three);
    \draw [dashed, fill opacity=0.2] (v_two) -- (v_three);
    \draw (v_one) node [below left] {$v_1$};
    \draw (v_two) node [below right] {$v_2$};
    \draw (v_three) node [above left] {$v_3$};
    \node[] at (3.5,-0.5) {$T_1$};

    \coordinate (v_one) at (6,0);
    \coordinate (v_two) at (7,0);
    \coordinate (v_three) at (6,1);
    \draw [thin] (v_two) -- (v_three);
    \draw [dashed, fill=gray, fill opacity=0.2] (v_two) -- (v_one) -- (v_three);
    \draw (v_one) node [below left] {$v_1$};
    \draw (v_two) node [below right] {$v_2$};
    \draw (v_three) node [above left] {$v_3$};
    \node[] at (6.5,-0.5) {$T_2$};
\end{tikzpicture}
  
    \caption{Types of half-open unimodular simplices in $\R^2$.}
    \label{figure:threesimplices}
\end{figure}
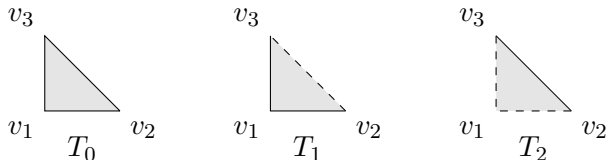

Up to unimodular transformations, there are three types of half-open unimodular simplices in $\R^2$ that we will consider; these are $T_0$, $T_1$, and $T_2$ as given in Figure~\ref{figure:threesimplices}. 

\goodbreak
\subsection{A Pick-type vector formula}

To determine the $h^1$-tensors from Theorem~\ref{prop:h_cone}, note that the Eulerian polynomial has a closed form
\begin{equation}\label{eq:A_j}
A_j(t)\ = \ \sum_{n=0}^j\sum_{i=0}^n(-1)^i\binom{j+1}{i}(n-i)^jt^n
\end{equation}
(see, e.g.,~\cite{ccd}). We then observe that $A_0(t)=1$, $A_1(t)=t$, and $A_2(t)=t^2+t$.

A comparison of coefficients of the numerator of~(\ref{eq:h_cone}) and that in Theorem~\ref{prop:h_cone} yields the formula
\begin{align*}
h_{S^{\ast}}^1(t)\ = \ \sum_{i=0}^2\OL^1(S_i)t^{i}(1-t)+\OL(S_i)t^{i+1}(v_1+v_2+v_3)
\end{align*} implying that
\begin{equation}\label{eq:hi_vector}
h_i^1(S^{\ast})\ = \ \OL^1(S_i)-\OL^1(S_{i-1})+\OL(S_{i-1})(v_1+v_2+v_3)
\end{equation}
for a half-open simplex $S^{\ast}$ where $S_i$ are defined as in Section~\ref{sec:half-open}.

By Theorem~\ref{thm:half-open decomposition}, any lattice polygon can be partitioned into unimodular transformations of half-open simplices. Therefore, to calculate $h^r$-tensors, we will need to understand the half-open parallelepipeds $\Pi_{T_0}$, $\Pi_{T_1}$, and $\Pi_{T_2}$. For ease, we provide skeletal descriptions of these here. By setting $S^{\ast}$ to $T_0$, $T_1$, and $T_2$ with the vertices given in Figure~\ref{figure:threesimplices}, we obtain:
\begin{alignat}{5}\label{eq:piped}
T_0 &\ : \ S_0\cap\Z^2=\{ 0 \};\nonumber \\ 
T_1 &\ : \ S_1\cap\Z^2=\{v_1\} ;\\
T_2 &\ : \ S_2\cap\Z^2=\{v_2+v_3\} \nonumber
\end{alignat}
\noindent where $S_i\cap\Z^2=\varnothing$ for any combination of $S_i$, $T_j$ not given.

\begin{prop}\label{h* r=1}
For any lattice polygon, we have
\[
h_P^1(t)\ = \ t\sum_{V}x+t^2\left(\sum_{E^o}(y+z) - 2\sum_{V^o}x\right)+t^3\sum_{V^o}x.
\]
\end{prop}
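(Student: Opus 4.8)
The plan is to pass through the $h^1$-tensor polynomial and a unimodular triangulation, exploiting that the $t^1$- and $t^3$-coefficients are already pinned down by general principles. Fix a unimodular triangulation $\mathcal{T}$ of $P$ with edge graph $G=(V,E)$ and choose a generic point $q\in P^{o}$. Since $q$ lies in the interior, $H_q(P)=P$, and applying Theorem~\ref{thm:half-open decomposition} to each dilate $nP$ (with the correspondingly scaled triangulation) gives $\OL^1(nP)=\sum_j \OL^1(nH_q(P_j))$ for every $n$ via \eqref{eq:halfopendec}. Forming generating functions and using \eqref{eq:h_cone}, all summands carry the common denominator $(1-t)^{4}$, so the numerators add and $h^1_P(t)=\sum_j h^1_{H_q(P_j)}(t)$. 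Recall from the general identities established earlier that $h^1_1(P)=\OL^1(P)=\sum_{V}x$ and $h^1_3(P)=\OL^1(P^{o})=\sum_{V^o}x$, while $h^1_0(P)=0$; these already match the $t^1$- and $t^3$-coefficients of the claimed formula. Hence the entire content of the proposition is the computation of $h^1_2(P)$.

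Next I would evaluate \eqref{eq:hi_vector} on each of the three half-open types using the one-point descriptions \eqref{eq:piped}. A direct substitution yields $h^1_{T_0}(t)=(v_1+v_2+v_3)t$, $h^1_{T_1}(t)=v_1 t+(v_2+v_3)t^2$, and $h^1_{T_2}(t)=(v_2+v_3)t^2+v_1 t^3$, where in $T_1$ the vertex $v_1$ is the apex opposite the single removed edge and in $T_2$ the vertices $v_2,v_3$ are the endpoints of the unique non-removed edge. Reading off the $t^2$-coefficient in a label-free way, each half-open simplex contributes to $h^1_2$ precisely the sum of those of its vertices that are an endpoint of exactly one removed (i.e.\ $q$-visible) edge. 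Writing $d_j(x)$ for the number of removed edges of the cell $P_j$ incident to a vertex $x$, the identity $[d_j(x)=1]=d_j(x)-2[d_j(x)=2]$ splits this contribution into an edge term $\sum_{e\in R_j}(y_e+z_e)$ over the removed edges $R_j$ of $P_j$ and a correction $-2x$ for the unique vertex (if any) with $d_j(x)=2$.

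Summing over the triangulation then produces two aggregate sums. For the edge term, genericity of $q\in P^{o}$ ensures that no boundary edge of $P$ is visible, while each interior edge $e$ of $\mathcal{T}$, being a facet of its two incident cells lying on opposite sides of the line through $e$, is visible from exactly one of them; thus the edge term collapses to $\sum_{E^o}(y+z)$. For the correction term I would show that a vertex $x$ satisfies $d_j(x)=2$ for a cell $P_j$ exactly when the direction $\vec{xq}$ lies in the cone antipodal to the angular sector of $P_j$ at $x$. Since the sectors at an interior lattice point tile the full plane of directions, their antipodal cones also tile it and contain $\vec{xq}$ for exactly one cell; since the sectors at a boundary lattice point fill only the (convex, hence $\le\pi$) tangent cone of $P$ at $x$, into which $\vec{xq}$ points for $q\in P^{o}$, no antipodal cone contains $\vec{xq}$. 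Hence each interior lattice point is counted once and each boundary point not at all, giving $-2\sum_{V^o}x$ and therefore $h^1_2(P)=\sum_{E^o}(y+z)-2\sum_{V^o}x$, which completes the identification.

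The main obstacle is the geometric bookkeeping in the last step: correctly translating ``facet visible from $q$'' into the incidence counts $d_j(x)$ and verifying the two tiling/antipodal claims, together with the care that boundary lattice points lying in the relative interior of an edge of $P$ (tangent cone exactly a half-plane) still contribute nothing. Everything else is routine substitution, and a useful internal check is that the same antipodal-cone count reproduces $h^1_3(P)=\sum_{V^o}x$ independently, confirming the normalization.
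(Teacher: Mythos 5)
Your proposal is correct and takes essentially the same route as the paper: fix a unimodular triangulation, apply the half-open decomposition of Theorem~\ref{thm:half-open decomposition}, compute $h^1_{T_0}$, $h^1_{T_1}$, $h^1_{T_2}$ from \eqref{eq:hi_vector} and \eqref{eq:piped}, and sum over the cells. Your antipodal-cone/visibility bookkeeping is exactly the step the paper compresses into ``a careful inspection of the $h^1$-tensor polynomials of the half-open simplices,'' and your shortcut for the $t$- and $t^3$-coefficients via $h^1_1(P)=\OL^1(P)$ and $h^1_3(P)=\OL^1(P^o)$ is a minor, valid simplification of the same argument.
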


\goodbreak
\begin{proof}
We determine the $h^1$-tensor polynomial of all half-open unimodular simplices, up to a unimodular transformation, with vertices $v_1,v_2,v_3$. Using formula (\ref{eq:hi_vector}) together with the values given in~(\ref{eq:piped}), we obtain the following $h^1$-tensor polynomials for each $T_i$:
\begin{align*}
h_{T_0}^1(t)&\ = \ t(v_1+v_2+v_3)\\
h_{T_1}^1(t)&\ = \ tv_1+t^2(v_2+v_3)\\
h_{T_2}^1(t)&\ = \ t^2((v_1+v_2)+(v_1+v_3)-2v_1)+t^3v_1
\end{align*}
Theorem~\ref{thm:half-open decomposition} together with a careful inspection of the $h^1$-tensor polynomials of the half-open simplices yield the result.
\end{proof}

From Proposition~\ref{h* r=1}, we can deduce formulas for the Ehrhart vectors.

\begin{prop}
For any lattice polygon,
\[
\OL^1(nP)\ = \ \frac{n}{6}\left(2\sum_{V}x+4\sum_{V^o}x-\sum_{E^o}(y+z)\right)+\frac{n^2}{2}\sum_{\partial V}x+\frac{n^3}{6}\left(\sum_{\partial V}x + \sum_{E^o}(y+z)\right)
\]
\end{prop}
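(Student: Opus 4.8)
The plan is to invert the passage from the $h^1$-tensor polynomial back to the Ehrhart vector polynomial by means of the binomial expansion~\eqref{eq:expansion}. Since $P$ is a polygon we have $d=2$, and for $r=1$ the degree bound is $d+r=3$, so only the binomials $\binom{n+2}{3}$, $\binom{n+1}{3}$, $\binom{n}{3}$ appear. Reading off the coefficients of $t^i$ in Proposition~\ref{h* r=1} gives $h_0^1=0$, $h_1^1(P)=\sum_V x$, $h_2^1(P)=\sum_{E^o}(y+z)-2\sum_{V^o}x$, and $h_3^1(P)=\sum_{V^o}x$. Substituting these into~\eqref{eq:expansion} produces
\[
\OL^1(nP)\ = \ \left(\sum_V x\right)\binom{n+2}{3}+\left(\sum_{E^o}(y+z)-2\sum_{V^o}x\right)\binom{n+1}{3}+\left(\sum_{V^o}x\right)\binom{n}{3}.
\]

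Next I would expand the three binomials as polynomials in $n$, namely $\binom{n+2}{3}=\tfrac{1}{6}(n^3+3n^2+2n)$, $\binom{n+1}{3}=\tfrac{1}{6}(n^3-n)$, and $\binom{n}{3}=\tfrac{1}{6}(n^3-3n^2+2n)$, and then collect the tensor coefficients by powers of $n$. Grouping the $n^3$, $n^2$, and $n$ contributions separately yields leading factors $\sum_V x+\sum_{E^o}(y+z)-\sum_{V^o}x$, then $3\sum_V x-3\sum_{V^o}x$, and finally $2\sum_V x+4\sum_{V^o}x-\sum_{E^o}(y+z)$, each divided by $6$.

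The one conceptual ingredient—and the only step that is not pure bookkeeping—is the partition of lattice points induced by the triangulation: since $V$ is the set of \emph{all} lattice points of $P$, we have $V=V^o\sqcup\partial V$, whence $\sum_V x=\sum_{V^o}x+\sum_{\partial V}x$, equivalently $\sum_V x-\sum_{V^o}x=\sum_{\partial V}x$. Applying this to the $n^3$ and $n^2$ coefficients rewrites them as $\tfrac{1}{6}\bigl(\sum_{\partial V}x+\sum_{E^o}(y+z)\bigr)$ and $\tfrac{1}{2}\sum_{\partial V}x$ respectively, while the $n$ coefficient is already in the asserted shape; this reproduces the claimed formula verbatim. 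I do not expect a genuine obstacle here: the mathematical content is entirely carried by Proposition~\ref{h* r=1}, and the present statement follows mechanically from the definition of the $h^1$-vector together with the lattice-point identity above. The only point requiring care is the sign bookkeeping in the coefficient of $n$, where the contribution of $h_2^1$ (carried by the $-n$ term of $\binom{n+1}{3}$) must be tracked accurately.
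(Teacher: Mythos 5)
Your proposal is correct and takes essentially the same route as the paper: substituting the $h^1$-tensors read off from Proposition~\ref{h* r=1} into the expansion~\eqref{eq:expansion}, expanding the three binomial coefficients as cubics in $n$, and comparing coefficients using $\sum_V x - \sum_{V^o}x = \sum_{\partial V}x$. The paper compresses the final step into ``a quick comparison of coefficients''; you have simply carried out that bookkeeping explicitly, and your intermediate expressions are all accurate.
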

\begin{proof}
By definition, the Ehrhart vector polynomial equals
\[
\OL^{1}(nP)\ = \ h_0^{1}(P) {n+3 \choose 3} + h_1^{1}(P) {n+2 \choose 3}+ h_2^{1}(P) {n+1 \choose 3}+ h_3^{1}(P) {n \choose 3} \, .
\]
A substitution of values from Proposition~\ref{h* r=1} yields
\[
\OL^{1}(nP)\ = \ \frac{n^3+3n^2+2n}{6}\sum_V x+\frac{n^3-n}{6}\left(\sum_{E^o}(y+z) - 2\sum_{V^o}x\right)+\frac{n^3-3n^2+2n}{6}\sum_{\interior V}x\, .
\]
The result now follows from a quick comparison of coefficients.
\end{proof}

\goodbreak
\subsection{A Pick-type matrix formula}

We now determine the $h^2$-tensors in order to find a Pick-type formula for the discrete moment matrix.

Similar to the vector case, by comparing coefficients of the numerator of~(\ref{eq:h_cone}) and that in Theorem~\ref{prop:h_cone}, we obtain the formula
\begin{align*}
h_{S^{\ast}}^2(t)\ = \ \sum_{i=0}^2&\OL^2(S_i)t^i(1-t)^2+2(v_1+v_2+v_3)\OL^1(S_i)t^{i+1}(1-t)\\
&+(v_1^2+v_2^2+v_3^2)\OL(S_i)t^{i+1}+(v_1+v_2+v_3)^2\OL(S_i)t^{i+2}
\end{align*}
for a half-open simplex $S^{\ast}$ where $S_i$ are defined as in Section~\ref{sec:half-open}. The $h^2$-tensors of a half-open simplex are then found to be
\begin{align}\label{eq:hi_matrix}
\begin{split}
h_i^2(S^{\ast})\ = \ &\OL^2(S_i)-2\OL^2(S_{i-1})+\OL^2(S_{i-2})+2(v_1+v_2+v_3)\left(\OL^1(S_{i-1})-\OL^1(S_{i-2})\right)\\
&+(v_1^2+v_2^2+v_3^2)\OL(S_{i-1})+(v_1+v_2+v_3)^2\OL(S_{i-2})\, .
\end{split}
\end{align}

\goodbreak
\begin{prop}\label{h* r=2}
If $P$ is a lattice polygon, then
\[
h_P^2(t)\ = \ t\sum_{V}x^{2}+t^2\left(\sum_{E}(y+z)^{2} - \sum_{V}x^{2}\right)+t^3\left(\sum_{E^o}(y+z)^{2} - \sum_{V^o}x^{2}\right)+t^4\sum_{V^o}x^{2}.
\]
\end{prop}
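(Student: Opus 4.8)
The plan is to mirror the proof of the vector case (Proposition~\ref{h* r=1}): first compute the $h^2$-tensor polynomial of each of the three half-open unimodular simplices $T_0,T_1,T_2$ of Figure~\ref{figure:threesimplices}, then sum these contributions over a half-open decomposition of $P$ coming from a unimodular triangulation, and finally collect the total by a careful bookkeeping of vertices and edges.

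First I would substitute the parallelepiped data~\eqref{eq:piped} into formula~\eqref{eq:hi_matrix}. Writing $s=v_1+v_2+v_3$ and using the identity $(v_1+v_2)^2+(v_1+v_3)^2-v_1^2=v_1^2+v_2^2+v_3^2+2v_1v_2+2v_1v_3$, the three computations (each a telescoping of the shifted moments $\OL^{k_0}(S_i)$) yield
\begin{align*}
h^2_{T_0}(t)&=t\,(v_1^2+v_2^2+v_3^2)+t^2\,(v_1+v_2+v_3)^2,\\
h^2_{T_1}(t)&=t\,v_1^2+t^2\big((v_1+v_2)^2+(v_1+v_3)^2-v_1^2\big)+t^3(v_2+v_3)^2,\\
h^2_{T_2}(t)&=t^2(v_2+v_3)^2+t^3\big((v_1+v_2)^2+(v_1+v_3)^2-v_1^2\big)+t^4\,v_1^2.
\end{align*}
Since~\eqref{eq:piped} records the unique lattice point of each fundamental parallelepiped directly in terms of the vertices, these identities hold for an arbitrary unimodular triangle of the given visibility type, not only in the position of Figure~\ref{figure:threesimplices}. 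Rewriting the leading term of $h^2_{T_0}$ via $(v_1+v_2+v_3)^2=\sum_{\text{edges}}(y+z)^2-\sum_{\text{vertices}}v^2$ exposes the pattern I will exploit: in every type a \emph{kept} edge contributes $(y+z)^2$ at $t^2$ and a \emph{removed} (visible) edge contributes $(y+z)^2$ at $t^3$, while the pure vertex-square terms occur as $+v^2$ at $t^1$ and $-v^2$ at $t^2$ for vertices lying in the closed piece, and as $-v_1^2$ at $t^3$, $+v_1^2$ at $t^4$ for the apex $v_1$ of a $T_2$ piece.

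Next I would fix a generic point $q$ in the \emph{interior} of $P$ and apply Theorem~\ref{thm:half-open decomposition} to a unimodular triangulation $\mathcal{T}$, so that $h^2_P(t)=\sum_m h^2_{H_q(P_m)}(t)$, each piece being unimodularly equivalent to some $T_j$ with $v_1$ the apex determined by the facets visible from $q$. The edge bookkeeping is then immediate: an interior edge is kept in exactly one of its two triangles and removed in the other, contributing $(y+z)^2$ once at $t^2$ and once at $t^3$; a boundary edge lies in a single triangle and, because $q$ is interior to $P$, is never visible, hence contributes only at $t^2$. This gives the edge parts $\sum_E(y+z)^2$ at $t^2$ and $\sum_{E^o}(y+z)^2$ at $t^3$. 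For the vertex terms I would argue by ownership: since the half-open pieces partition $P$, and a $T_0$ piece contains all three of its vertices, a $T_1$ piece only its apex, and a $T_2$ piece none, each lattice point $x\in V$ is a $T_0$-vertex or $T_1$-apex of exactly one piece; thus the coefficient of $x^2$ is $+1$ at $t^1$ and $-1$ at $t^2$ for every $x\in V$, matching $\pm\sum_V x^2$. As a consistency check these two coefficients also follow from $h_1^2=\OL^2(P)$ and, at $t^4$, from $h^2_4(P)=\OL^2(P^o)=\sum_{V^o}x^2$ via Theorem~\ref{thm:tensorreciprocity}.

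The delicate part, and the step I expect to be the main obstacle, is the count of $T_2$-apices at $t^3$ and $t^4$: I must show that a vertex $x$ is the apex of a $T_2$ piece for exactly one triangle when $x\in V^o$ and for no triangle when $x\in\partial V$. The key observation is that $x$ is the apex of a $T_2$ piece $P_m$ precisely when both edges of $P_m$ at $x$ are visible from $q$, i.e. when $q$ lies in the cone at $x$ vertically opposite to the angular sector of $P_m$. As $P_m$ ranges over the star of $x$, these opposite cones tile the reflection through $x$ of the star: for an interior vertex the star sweeps out all directions, so a generic $q$ lies in exactly one opposite cone (count $1$), whereas for a boundary vertex the reflected star points out of $P$, so the interior point $q$ lies in none (count $0$). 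Hence the vertex part is $-\sum_{V^o}x^2$ at $t^3$ and $+\sum_{V^o}x^2$ at $t^4$, and assembling the four coefficients yields the claimed $h^2_P(t)$. Throughout I would keep $q$ clear of the finitely many lines that make a facet edge-on or push an opposite-cone boundary through $q$, which is possible within the interior of $P$.
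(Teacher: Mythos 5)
Your proposal is correct and takes essentially the same route as the paper: it computes $h^2_{T_0}$, $h^2_{T_1}$, $h^2_{T_2}$ from~\eqref{eq:hi_matrix} and~\eqref{eq:piped} (your polynomials coincide with the paper's, since $(v_1+v_2)^2+(v_2+v_3)^2+(v_3+v_1)^2-v_1^2-v_2^2-v_3^2=(v_1+v_2+v_3)^2$) and then sums these over the half-open decomposition induced by a unimodular triangulation via Theorem~\ref{thm:half-open decomposition}. The paper compresses that last step into a single sentence, so your edge/vertex bookkeeping --- in particular the opposite-cone argument showing that each interior lattice point is the apex of exactly one $T_2$ piece while each boundary lattice point is the apex of none --- is precisely the careful inspection the paper leaves to the reader, and you carry it out correctly.
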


\begin{proof}
Similar to the $h^1$-tensor polynomial, we determine the $h^2$-tensor polynomial of all half-open unimodular simplices, up to unimodular transformation. Formula~(\ref{eq:hi_matrix}) for each $T_i$ with the values from~(\ref{eq:piped}) yields the following:
\begin{align*}
h_{T_0}^2(t)&\ = \ t(v_1^2+v_2^2+v_3^2)+t^2((v_1+v_2)^2+(v_2+v_3)^2+(v_3+v_1)^2-v_1^2-v_2^2-v_3^2)\\
h_{T_1}^2(t)&\ = \ tv_1^2+t^2((v_1+v_2)^2 + (v_1+v_3)^2 - v_1^2)+t^3(v_2+v_3)^2\\
h_{T_2}^2(t)&\ = \ t^2(v_2+v_3)^2+t^3((v_1+v_2)^2 + (v_1+v_3)^2 - v_1^2)+t^4v_1^2
\end{align*}
The claim now follows from Theorem~\ref{thm:half-open decomposition}.
\end{proof}

\goodbreak
From Proposition~\ref{h* r=2}, we can now deduce formulas for the Ehrhart matrices.

\begin{prop}
Given a lattice polygon $P$, we have
\begin{align*}
\OL^2(nP)\ = \ &\frac{n}{12}\sum_{\partial E}(y-z)^2+\frac{n^2}{24}\left(12\sum _V x^2+12\sum _{V^o} x^2-\sum_{E}(y+z)^2-\sum_{E^o}(y+z)^2\right)\\
&+\frac{n^3}{12}\left(2\sum_{\partial V}x^2+\sum_{\partial E}(y+z)^{2}\right)+\frac{n^4}{24}\left(\sum_{E}(y+z)^2+\sum_{E^o}(y+z)^2\right).
\end{align*}
\end{prop}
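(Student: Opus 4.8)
The plan is to follow exactly the route used in the vector case: read off the $h^2$-tensors from Proposition~\ref{h* r=2} and expand the defining binomial identity~\eqref{eq:expansion}. Since here $d=2$ and $r=2$, so that $d+r=4$ and $h_0^2(P)=0$, the Ehrhart matrix polynomial is
\[
\OL^2(nP)\ = \ h_1^2(P)\binom{n+3}{4}+h_2^2(P)\binom{n+2}{4}+h_3^2(P)\binom{n+1}{4}+h_4^2(P)\binom{n}{4}\, .
\]
First I would substitute the coefficients read off from the $h^2$-tensor polynomial in Proposition~\ref{h* r=2}, namely $h_1^2=\sum_{V}x^2$, $h_2^2=\sum_{E}(y+z)^2-\sum_{V}x^2$, $h_3^2=\sum_{E^o}(y+z)^2-\sum_{V^o}x^2$, and $h_4^2=\sum_{V^o}x^2$.

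Next I would expand each $\binom{n+k}{4}$ as a degree-four polynomial in $n$ and collect, for each power of $n$, the resulting coefficient as a linear combination of the four sums $\sum_{V}x^2$, $\sum_{V^o}x^2$, $\sum_{E}(y+z)^2$, and $\sum_{E^o}(y+z)^2$. This is routine bookkeeping. The coefficients of $n^4$ and $n^2$ then match the asserted expression directly, and the coefficient of $n^3$ matches after the elementary rewritings $\sum_{E}-\sum_{E^o}=\sum_{\partial E}$ and $\sum_{V}-\sum_{V^o}=\sum_{\partial V}$, which turn it into $\tfrac{1}{12}\bigl(2\sum_{\partial V}x^2+\sum_{\partial E}(y+z)^2\bigr)$.

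The only term requiring a genuinely new idea is the linear coefficient, and I expect this to be the main obstacle. The binomial expansion produces
\[
\frac{1}{12}\Bigl(4\sum_{\partial V}x^2-\sum_{\partial E}(y+z)^2\Bigr)
\]
for the coefficient of $n$, whereas the statement asserts $\tfrac{1}{12}\sum_{\partial E}(y-z)^2$. Since $(y-z)^2+(y+z)^2=2(y^2+z^2)$, these two agree precisely when $2\sum_{\partial V}x^2=\sum_{\partial E}(y^2+z^2)$. This is the heart of the argument. It holds because the boundary edges $\partial E$ of a unimodular triangulation trace out $\partial P$ as a simple closed polygonal curve, so that each boundary lattice point $x\in\partial V$ is incident to exactly two boundary edges; consequently each $x\in\partial V$ is counted with multiplicity two in $\sum_{\partial E}(y^2+z^2)$, yielding $2\sum_{\partial V}x^2$. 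I would make this degree-two (cycle) observation precise, substitute it into the linear coefficient, and thereby complete the comparison of coefficients.
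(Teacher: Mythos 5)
Your proposal is correct and follows exactly the paper's own route: substitute the $h^2$-tensors from Proposition~\ref{h* r=2} into the binomial expansion and compare coefficients, arriving at the same intermediate expression $\tfrac{1}{12}\bigl(4\sum_{\partial V}x^2-\sum_{\partial E}(y+z)^2\bigr)$ for the linear coefficient and rewriting it as $\tfrac{1}{12}\sum_{\partial E}(y-z)^2$. The only difference is that the paper states this last identity without justification, whereas you correctly supply the reason (each boundary lattice point lies on exactly two boundary edges, so $\sum_{\partial E}(y^2+z^2)=2\sum_{\partial V}x^2$), which is a welcome filling-in of detail rather than a different approach.
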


\begin{proof}
By definition, the Ehrhart matrix polynomial equals
\[
\OL^{2}(nP)\ = \ h_0^{2}(P) {n+4 \choose 4} + h_1^{2}(P) {n+3 \choose 4}+ h_2^{2}(P) {n+2 \choose 4}+ h_3^{2}(P) {n+1\choose 4}+ h_4^{2}(P) {n\choose 4} \, .
\]
The result follows now from Proposition~\ref{h* r=2} and comparing coefficients. 
For $\OL_{1}^{2}(P)$, we further observe that
\[
\OL_{1}^{2} (P)\ = \ \frac{1}{12}\left(4\sum_{\partial V}x^2-\sum_{\partial E}(y+z)^{2}\right)\ = \ \frac{1}{12}\sum_{\partial E}(y-z)^{2} \, .
\qedhere
\]
\end{proof}

\goodbreak
\section{Positivity for $h^2$-vectors}\label{sec:positivity}

A fundamental theorem in Ehrhart theory is Stanley's Nonnegativity Theorem \cite{RS80} that states that the $h^\ast$-vector of every lattice polytope has nonnegative entries. 
While positivity of real numbers is canonically defined up to sign change, there are many different choices for higher dimensional vector spaces such as $\TT^r$; one for every pointed cone (compare, e.g., \cite{jochemkocombinatorial}). An important and well-studied cone inside the vector space of symmetric matrices is the cone of positive semidefinite matrices. 

\goodbreak
A matrix $M\in \R^{d\times d}$ is called \textbf{positive semidefinite} if $x^tMx\geq 0$ for all $x\in \R^d$. By the identification of $\TT^2$ with $\R^{d\times d}$, we call a tensor $T\in \TT^2$ \textbf{positive semidefinite} if its corresponding symmetric matrix $(T_{ij})$ is positive semidefinite. By the spectral theorem, $T$ is a \textbf{sum of squares}; more precisely, if $T$ has eigenvalues $\lambda_1,\dots,\lambda_d \geq 0$ and corresponding normalized eigenvectors $u_1,\dots,u_d$ then 
\[
(T_{ij})\ = \ \sum_{k=1}^d\lambda_ku_ku_k^t\, 
\]
which is equivalent to $T=\sum_{k=1}^d\lambda_ku_k^2\in \TT^2$. Therefore, a tensor is positive semidefinite if and only if it is a sum of squares. 

As is the case for usual Ehrhart polynomials, the coefficients of Ehrhart tensor polynomials can be negative. However, in contrast to Ehrhart polynomials, this phenomenon appears already in dimension $2$. For segments, it can be seen that the linear coefficient of the Ehrhart tensor polynomial  is $\sum _E (y-z)^2$. Furthermore, by \cite[Lemma 26]{LS} and Proposition~\ref{prop:secondcoeff}, all coefficients for line segments are positive semidefinite. The following example demonstrates negative definiteness in the plane.

\begin{ex}\label{ex:nonpositiveEhrharttensors}
    Let $P$ be the triangle spanned by vertices
    $v_1 = (0,1)^t$,
    $v_2 = (-1,-7)^t$ and
    $v_3 = (1,-4)^t$.
    The Ehrhart tensor polynomial of $P$ can be calculated to be
    \begin{align*}
        \OL^{2}(nP)=
        \begin{pmatrix} \frac{1}{2} & \frac{3}{4} \\[6pt] \frac{3}{4} &
\frac{49}{6} \end{pmatrix} n
+ \begin{pmatrix} -\frac{1}{12} & -\frac{1}{8} \\[6pt] -\frac{1}{8} &
        -\frac{23}{12}
\end{pmatrix} n^2
+ \begin{pmatrix} \frac{1}{2} & \frac{3}{4} \\[6pt] \frac{3}{4} & \frac{149}{6}
\end{pmatrix} n^3
+ \begin{pmatrix} \frac{13}{12} & \frac{13}{8} \\[6pt] \frac{13}{8} &
\frac{1079}{12}
\end{pmatrix} n^4.
    \end{align*}
We observe that the coefficient of $n^2$ is negative definite.
Lattice triangles for which this coefficient is indefinite also exist; for
example, the triangle with vertices at $(0,-4)^t$, $(0,4)^t$ and $(-1,0)^t$.
\end{ex}

Our main result is the following analogue to Stanley's Nonnegativity Theorem for the $h^2$-tensor polynomial of a lattice polygon.

\begin{thm}\label{thm:main}
The $h^2$-tensors of any lattice polygon are positive semidefinite.
\end{thm}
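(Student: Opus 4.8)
The plan is to establish positive semidefiniteness directly from the explicit formula for $h_P^2(t)$ given in Proposition~\ref{h* r=2}. Recall that, by that result,
\[
h_P^2(t)\ = \ t\sum_{V}x^{2}+t^2\Big(\sum_{E}(y+z)^{2} - \sum_{V}x^{2}\Big)+t^3\Big(\sum_{E^o}(y+z)^{2} - \sum_{V^o}x^{2}\Big)+t^4\sum_{V^o}x^{2},
\]
so the four $h^2$-tensor coefficients are $h_1^2=\sum_V x^2$, $h_2^2=\sum_E(y+z)^2-\sum_V x^2$, $h_3^2=\sum_{E^o}(y+z)^2-\sum_{V^o}x^2$, and $h_4^2=\sum_{V^o}x^2$. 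The outer coefficients $h_1^2$ and $h_4^2$ are manifestly sums of squares $x^2=xx^t$, hence positive semidefinite by the sum-of-squares characterization recorded just before the theorem. **All the difficulty lies in $h_2^2$ and $h_3^2$**, where a sum over squares of edge midsums is diminished by a sum over squares of vertices, so the positive and negative contributions must be matched up carefully.

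**First I would** treat $h_2^2=\sum_E(y+z)^2-\sum_V x^2$. The idea is to distribute each vertex term $x^2$ among the edges incident to $x$ and to exhibit, for each edge $\{y,z\}$, a pointwise sum-of-squares identity. Since $(y+z)^2$ is a rank-one positive semidefinite matrix and $y^2,z^2$ are as well, I expect to use an identity of the shape $(y+z)^2=\tfrac12(y^2+z^2)+\tfrac12\big((y+z)^2-(y-z)^2\big)+\cdots$ or, more usefully, to split the vertex budget so that each edge absorbs a fractional share of $y^2$ and $z^2$ bounded by what $(y+z)^2$ can dominate. Because the triangulation is \emph{unimodular}, every vertex in $V$ has enough incident edges (at least two boundary edges for boundary vertices, and a full incident fan for interior ones) to absorb its own square; the combinatorics of how many edges meet at each vertex is what makes the bookkeeping close. **The main obstacle will be** precisely this matching: showing that the aggregate defect $\sum_E(y+z)^2-\sum_V x^2$ decomposes into a nonnegative combination of rank-one squares, rather than merely verifying it numerically.

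**Next I would** handle $h_3^2=\sum_{E^o}(y+z)^2-\sum_{V^o}x^2$ by the same strategy, now restricted to interior edges and interior vertices. Here an interior lattice point sits in the interior of the polygon and, in a unimodular triangulation, is surrounded by a full cycle of interior edges emanating from it; I would argue that this incident fan of interior edges supplies enough rank-one terms $(x+w)^2$ to dominate $x^2$ for each $w$ adjacent to the interior vertex $x$. A clean way to organize both $h_2^2$ and $h_3^2$ simultaneously is to reduce, via Theorem~\ref{thm:half-open decomposition}, to checking the claim on the three half-open simplex types $T_0,T_1,T_2$ whose $h^2$-tensor polynomials were computed in the proof of Proposition~\ref{h* r=2}; one reads off, for instance, that the $t^2$-coefficient of $h_{T_0}^2$ equals $(v_1+v_2)^2+(v_2+v_3)^2+(v_3+v_1)^2-v_1^2-v_2^2-v_3^2$, and an elementary algebraic identity shows this is a sum of squares. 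Summing such local sum-of-squares certificates over the cells of the triangulation, with attention to how shared edges and vertices are counted, would then yield global positive semidefiniteness.

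**I anticipate** that a naive cell-by-cell argument fails because the per-simplex defects are \emph{not} individually positive semidefinite once one passes to $h_3^2$ — the interior structure only emerges after assembling the whole triangulation, which is exactly the ``intricate decomposition of lattice points'' the introduction warns about and the reason monotonicity-based proofs do not apply. Consequently, rather than a purely local certificate, I would partition the lattice points of $P$ into groups and, for each group, produce a tailored sum-of-squares identity for the combined contribution to $h_2^2$ (respectively $h_3^2$); verifying that these groups partition $V$ and $E$ correctly, and that no square is over- or under-counted, is where the real work sits. Once every coefficient is written explicitly as $\sum_k \lambda_k u_k u_k^t$ with $\lambda_k\ge 0$, positive semidefiniteness follows immediately from the spectral characterization, completing the proof.
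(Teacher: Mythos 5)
Your high-level outline agrees with the paper's proof (the outer coefficients are trivially sums of squares, the difficulty sits in $h_2^2$ and $h_3^2$, and cell-by-cell certificates over a half-open decomposition cannot work --- indeed Example~\ref{ex:counter_ex_monotonicity} shows $h_2^2$ of a single half-open unimodular simplex can already fail to be positive semidefinite, not only $h_3^2$ as you suggest). But exactly where the work starts, you stop: you conclude that one must ``partition the lattice points into groups and produce a tailored sum-of-squares identity for each group,'' and that partition and those identities \emph{are} the proof, which you do not supply. The paper closes this gap with two concrete devices missing from your proposal: first, a \emph{sparse decomposition} (Lemma~\ref{ref:lemma_decomp}), i.e.\ a family of lattice polygons $P_1,\ldots,P_m$, each containing exactly $3$ or $4$ lattice points, meeting pairwise in at most a common vertex and covering $P\cap\Z^2$; combined with the explicit sum-of-squares computations for all $3$- and $4$-point polygons (Lemma~\ref{ref:lemma_h2_small_polytope}, resting on the classification of Lemma~\ref{lem:fig}), this yields $h_2^{2}(P)=\sum_{i}h_2^{2}(P_i)+\sum_{E\setminus E'}(y+z)^2+\sum_{V}(\alpha_x-1)x^{2}$ with every summand positive semidefinite. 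Second, for $h_3^2$, the geometric fact (Lemma~\ref{ref:lemma_planar_cond}) that every interior lattice point $v$ of $P$ equals $\tfrac12(v_1+v_2)$ or $\tfrac13(v_1+v_2+v_3)$ for other lattice points of $P$; only this relation makes the star quantity $a(v)=\sum_{u\in N(v)}\tfrac12(v+u)^2-v^2$ provably a sum of squares.

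Moreover, the one concrete mechanism you do propose for $h_2^2$ --- letting each edge absorb ``a fractional share of $y^2$ and $z^2$ bounded by what $(y+z)^2$ can dominate'' --- provably cannot work. A rank-one positive semidefinite matrix dominates nothing outside its own ray: if $(y+z)^2\succeq \lambda y^2+\mu z^2$ with $\lambda,\mu\geq 0$, test against $w$ orthogonal to $y+z$; the left side gives $0$ while the right side gives $\lambda(y\cdot w)^2+\mu(z\cdot w)^2=(\lambda+\mu)(y\cdot w)^2$, since $y\cdot w=-z\cdot w$, and $y\cdot w\neq 0$ whenever $y,z$ are not parallel. Hence $\lambda=\mu=0$, so no per-edge splitting of the vertex terms exists; positivity of $h_2^2$ only emerges after grouping several edges (the paper's $3$- and $4$-point polygons are the smallest workable groups, e.g.\ $(v_1+v_2)^2+(v_2+v_3)^2+(v_3+v_1)^2-v_1^2-v_2^2-v_3^2=(v_1+v_2+v_3)^2$). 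The same objection applies to your fan argument for $h_3^2$: each $(v+u)^2$ is again rank one, so ``having enough incident edges'' is not by itself sufficient, and without Lemma~\ref{ref:lemma_planar_cond} you have no identity certifying $a(v)\succeq 0$. As it stands, your proposal is a correct diagnosis of the difficulty together with a plan whose only specified step fails; the essential ideas of the proof are absent.
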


Before proving Theorem~\ref{thm:main}, we make a few more observations. Positive semidefiniteness of $h^2$-tensors is preserved under unimodular transformations since, from Equation~\eqref{eq:expansion} and comparing coefficients, we have 
\[  
  h^r _i (\phi P)(v,v)=h^r_i (\phi^t v, \phi ^t v)
  \]
for all $P\in \mathcal{P}(\Z^d)$, $\phi \in \GL (\Z^d)$, and $v\in \mathbb{R}^d$. However, as the next example shows, positive semidefiniteness of the $h^2$-vector is in general not preserved under translation. 

    \begin{ex} \label{ex:counter_ex_monotonicity}
      Let $S =  \conv\{v_1,v_2,v_3\}\setminus\conv\{v_2,v_3\}$ be the half-open simplex with vertices
      $v_1 =~(3,-2)^t$,
      $v_2 = (2,-2)^t$, and
      $v_3 = (2,-1)^t$.
      From the formula of the $h^2$-vector of a half-open simplex which can be found in the proof of Proposition~\ref{h* r=2},
      we obtain that
      \begin{equation*}
          h_S^2(t)\ =  \ \begin{pmatrix} 4 & -4 \\ -4 & 4
\end{pmatrix}t+\begin{pmatrix} 37 & -28 \\ -28 & 21
\end{pmatrix}t^2+\begin{pmatrix} 25 & -15 \\ -15 & 9
\end{pmatrix}t^3\, .
        \end{equation*}
That is, with a determinant of $-7$, the matrix $h^2_2(S)$ is not positive
semidefinite. However, it can be seen that the positive semidefiniteness of $h^2$-tensors is not preserved under translations. 
To illustrate, consider the translate $S-v_1$. The $h^2$-vector of the translated simplex
      \begin{equation*}
          h_{S-v_1}^2(t)\ =  \ \begin{pmatrix} 1 & 0 \\ 0 & 1 \end{pmatrix}t^2
        + \begin{pmatrix} 1 & 1 \\ 1 & 1 \end{pmatrix} t^3, \\
        \end{equation*}
has positive semidefinite coefficients.
  \end{ex}
  
  \goodbreak
Since Example~\ref{ex:counter_ex_monotonicity} shows that $h^2$-tensors of half-open polytopes can be negative, it follows that $h^2$-tensors are not monotone with respect to inclusion in contrast to the coefficients of the $h^\ast$-polynomial~\cite{RS93}. Therefore, techniques such as irrational decomposition or half-open decomposition that succesfully helped prove Stanley's Nonnegativity Theorem (see \cite{ccd,jochemkocombinatorial}) cannot immediately be applied with Theorem \ref{thm:half-open decomposition}; we will have to take a different route.

To prove Theorem \ref{thm:main}, we decompose a lattice polygon into lattice polygons with few vertices for which the $h^2$-vectors can easily be calculated. For the remainder of this article, allow a lattice polygon to always mean a full-dimensional in $\R^2$ although the argument is independent from the chosen ambient space. A \textbf{sparse decomposition} of $P\in\poly$ is a finite set $\mathcal{D}=~\{P_1,P_2,\ldots, P_m\}$ of lattice polygons such that 
\begin{enumerate}[i)]
\item $\OL(P_i)\in\{3,4\}$ for each $i\in[m]$, 
\item $P_i \cap P_j = \emptyset$ or is a common vertex of $P_i$ and $P_j$ for all $i\neq j$, and
\item $P\cap\Z^2 = \bigcup_{i=1}^m P_i\cap\Z^2$.
\end{enumerate}

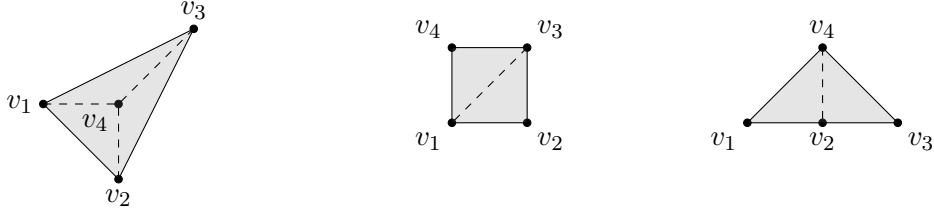
\begin{figure}
\begin{minipage}{0.3\textwidth}
    \begin{tikzpicture}
    \clip (-2.7,-1.7) rectangle (1.2,3.2);

    \coordinate (v_one) at (-1,0);
    \coordinate (v_two) at (0,-1);
    \coordinate (v_three) at (0,0);
    \coordinate (v_four) at (1,1);

    \node[draw,circle,inner sep=1pt,fill] at (v_one) {};
    \node[draw,circle,inner sep=1pt,fill] at (v_two) {};
    \node[draw,circle,inner sep=1pt,fill] at (v_three) {};
    \node[draw,circle,inner sep=1pt,fill] at (v_four) {};
    \draw [thin, fill=gray, fill opacity=0.2] (v_one) -- (v_two) -- (v_four) -- (v_one);
        \draw [thin, dashed] (v_one) -- (v_three);
                \draw [thin, dashed] (v_two) -- (v_three);
                        \draw [thin, dashed] (v_four) -- (v_three);
    \draw (v_one) node [left] {$v_1$};
    \draw (v_two) node [below] {$v_2$};
    \draw (v_three) node [below left] {$v_4$};
    \draw (v_four) node [above] {$v_3$};
\end{tikzpicture}
\end{minipage}
\begin{minipage}{0.3\textwidth}
    \begin{tikzpicture}
    \clip (-2.2,-1.2) rectangle (1.7,3.2);

    \coordinate (v_one) at (0,0);
    \coordinate (v_two) at (0,1);
    \coordinate (v_three) at (1,0);
    \coordinate (v_four) at (1,1);

    \node[draw,circle,inner sep=1pt,fill] at (v_one) {};
    \node[draw,circle,inner sep=1pt,fill] at (v_two) {};
    \node[draw,circle,inner sep=1pt,fill] at (v_three) {};
    \node[draw,circle,inner sep=1pt,fill] at (v_four) {};
    \draw [thin, fill=gray, fill opacity=0.2] (v_one) -- (v_two) -- (v_four) -- (v_three)--(v_one);
                        \draw [thin, dashed] (v_four) -- (v_one);
    \draw (v_one) node [below left] {$v_1$};
    \draw (v_two) node [above left] {$v_4$};
    \draw (v_three) node [below right] {$v_2$};
    \draw (v_four) node [above right] {$v_3$};
\end{tikzpicture}
\end{minipage}
\begin{minipage}{0.3\textwidth}
    \begin{tikzpicture}
    \clip (-2.2,-1.2) rectangle (1.7,3.2);

    \coordinate (v_one) at (-1,0);
    \coordinate (v_two) at (0,0);
    \coordinate (v_three) at (1,0);
    \coordinate (v_four) at (0,1);

    \node[draw,circle,inner sep=1pt,fill] at (v_one) {};
    \node[draw,circle,inner sep=1pt,fill] at (v_two) {};
    \node[draw,circle,inner sep=1pt,fill] at (v_three) {};
    \node[draw,circle,inner sep=1pt,fill] at (v_four) {};
    \draw [thin, fill=gray, fill opacity=0.2] (v_one) -- (v_two) -- (v_three) --(v_four) -- (v_one);
                            \draw [thin, dashed] (v_four) -- (v_two);
    \draw (v_one) node [below left] {$v_1$};
    \draw (v_two) node [below] {$v_2$};
    \draw (v_three) node [below right] {$v_3$};
    \draw (v_four) node [above] {$v_4$};

\end{tikzpicture}
  
\end{minipage}
\caption{Lattice polygons with 4 lattice points and their unimodular triangulations.}
\label{figure:polygon4points}
\end{figure}

\begin{lemma}{\cite[Section 4]{LZ11}}\label{lem:fig}
Up to unimodular transformation, there are three different lattice polygons containing exactly four lattice points. They are given in Figure \ref{figure:polygon4points}.
\end{lemma}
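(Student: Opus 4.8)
The statement is a finite classification, so the natural route is Pick's Formula together with a short case analysis. The plan is to first pin down the possible arithmetic profiles and then show each is realized by a single unimodular class. Write $b$ for the number of lattice points on $\partial P$, so that $I := 4 - b$ is the number of interior lattice points, and let $a$ denote the area of $P$. Since $P$ is full-dimensional it has at least three vertices, whence $b \ge 3$; together with $I \ge 0$ this already forces $b \in \{3,4\}$. Pick's Formula, $a = I + \tfrac{b}{2} - 1$, then gives $a = 3 - \tfrac{b}{2}$, so exactly two profiles survive: $(b,I,a) = (3,1,\tfrac32)$ and $(b,I,a)=(4,0,1)$. The remaining work is to classify each profile, with the second one splitting further into a triangle and a quadrilateral, matching the three pictured polygons.

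For the profile $(3,1,\tfrac32)$, the polygon is a triangle $T$ whose unique interior lattice point I would translate to the origin $O$. Coning $O$ to the three edges tiles $T$ into three lattice triangles whose areas sum to $\tfrac32$; since every lattice triangle has area at least $\tfrac12$, each of the three is in fact unimodular. Hence two of the vertices, viewed as vectors from $O$, form a lattice basis, and a $\GL(\Z^2)$ transformation sends them to $(1,0)$ and $(0,1)$. Writing the third vertex as $(p,q)$, unimodularity of the remaining two cone triangles forces $|p|=|q|=1$, and the requirement that $O$ be interior singles out $(p,q)=(-1,-1)$. Up to the point reflection $x\mapsto -x$, which lies in $\GL(\Z^2)$, this is exactly the first pictured triangle, so the class is unique.

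For the profile $(4,0,1)$, every edge is primitive except possibly one carrying a single extra lattice point, so $P$ is either a triangle with one subdivided edge or a convex quadrilateral. In the triangle case I would normalize the subdivided edge to $(-1,0),(0,0),(1,0)$ by $\GL(\Z^2)$, note that area $1$ pins the apex to height $\pm 1$, and then annihilate its horizontal coordinate by a shear, landing on the third pictured triangle. In the quadrilateral case, a diagonal splits $P$ into two lattice triangles which, because $P$ has no interior and no extra boundary lattice points, are empty and hence unimodular; normalizing one of them to $(0,0),(1,0),(0,1)$ and applying a shear that straightens the shared diagonal reduces $P$ to a unimodular parallelogram, which a final shear identifies with the unit square. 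I expect the quadrilateral case to be the main obstacle: one must invoke the convexity inequalities at all four vertices to show the last vertex is forced (rather than merely constrained), and organize the normalizing shears so that the resulting equivalence is genuinely a single $\GL(\Z^2)$ map and not a case-dependent one. Collecting the three classes then establishes the lemma.
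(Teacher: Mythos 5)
Your route is necessarily different from the paper's, because the paper does not prove this lemma at all: Lemma~\ref{lem:fig} is stated with a citation to \cite[Section 4]{LZ11} and no argument is given. So the relevant question is whether your self-contained proof is sound, and it essentially is. The Pick-formula bookkeeping correctly isolates the two profiles $(b,I,a)=(3,1,\tfrac{3}{2})$ and $(4,0,1)$, and both triangle cases are complete: in the interior-point case, the coning argument forces all three cone triangles to be unimodular, the normalization to $(1,0),(0,1),(p,q)$ with $|p|=|q|=1$ is correct, and $(p,q)=(-1,-1)$ is indeed the unique choice placing the origin in the interior, matching the first pictured triangle after negation; in the subdivided-edge case, the normalization of the long edge to $(-1,0),(0,0),(1,0)$ followed by a reflection and a shear lands exactly on the third pictured triangle. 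What your approach buys is an elementary, fully self-contained classification where the paper relies on an external reference.

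The one place where you assert rather than prove is the quadrilateral case, and you flagged it yourself. The phrase ``applying a shear that straightens the shared diagonal reduces $P$ to a unimodular parallelogram'' presupposes the conclusion: that the quadrilateral is a parallelogram is precisely what must be shown. Fortunately the missing step is short. After normalizing the triangle on one side of the diagonal to $(0,0),(1,0),(0,1)$, write the fourth vertex as $w=(a,b)$. The triangle $(1,0),(0,1),w$ has area $\tfrac{1}{2}|a+b-1|=\tfrac{1}{2}$, and since $w$ lies on the side of the diagonal opposite the origin, $a+b-1>0$, so $a+b=2$. Convexity of the quadrilateral $(0,0),(1,0),w,(0,1)$ at the vertices $(1,0)$ and $(0,1)$ gives $2-a>0$ and $a>0$, hence $a=1$ and $w=(1,1)$: the unit square appears directly, with no further shears needed. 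With that computation inserted, your proof is complete and correct.
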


The following lemma ensures that every lattice polygon has a sparse decomposition.

\goodbreak
\begin{lemma} \label{ref:lemma_decomp}
  Every lattice polygon has a sparse decomposition.
\end{lemma}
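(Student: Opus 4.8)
The plan is to argue by induction on the number of lattice points $N=\OL(P)$. Since $P$ is full-dimensional it has at least three lattice points, and when $N\in\{3,4\}$ the one-element family $\{P\}$ is already a sparse decomposition; for $N=4$ the three possible shapes of $P$ are exactly those classified in Lemma~\ref{lem:fig}. For the inductive step I would peel one small piece off a corner of $P$ and recurse on what remains. Concretely, I would fix a unimodular triangulation of $P$ (which exists for every planar lattice polygon), choose a vertex $v$ of $P$, and let $Q=\conv\{v,a,b\}$ be the triangle of the triangulation incident to $v$ along a boundary edge, so that $a$ is the lattice point adjacent to $v$ on $\partial P$ and $\OL(Q)=3$. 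I would then discard the two points $v,a$, keep $s:=b$, and set $P'=\conv\big((P\cap\Z^2)\setminus\{v,a\}\big)$.

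The core of the step is to verify three things about this choice: that $P'$ is again a full-dimensional lattice polygon with $\OL(P')=N-2<N$; that $P'\cap\Z^2=(P\cap\Z^2)\setminus\{v,a\}$, so no lattice point is spuriously gained or lost; and, most importantly, that $Q\cap P'=\{s\}$. The first two are bookkeeping once a good corner is chosen, while the third is the geometric crux.

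This last condition is precisely what makes the recursion close cleanly. If $Q\cap P'=\{s\}$, then any piece $R$ in a sparse decomposition of $P'$ satisfies $R\cap Q\subseteq P'\cap Q=\{s\}$, so $R$ and $Q$ meet in at most the single common vertex $s$; hence adjoining $Q$ to a sparse decomposition of $P'$ (which exists by the inductive hypothesis) yields a family still satisfying the common-vertex condition. The size condition is immediate from $\OL(Q)\in\{3,4\}$, and the covering condition follows from $(P\cap\Z^2)=(P'\cap\Z^2)\cup(Q\cap\Z^2)$. Thus the whole problem reduces to producing, for each $P$ with $N\ge 5$, a single admissible piece $Q$ attached to the remainder along exactly one vertex.

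I expect the production of $Q$ to be the main obstacle. A naive corner triangle typically shares its entire base $\conv\{a,b\}$ with $P'$, violating the common-vertex condition; discarding the \emph{second} point $a$ is what lets $Q$ poke up to the single vertex $s$, but one must then guarantee that $P'$ neither drops dimension nor reabsorbs $a$ into its hull. Both failures genuinely occur for particular choices: for thin triangles such as $\conv\{(0,0),(4,0),(0,1)\}$, peeling along the short edge collapses $P'$ onto a line, whereas peeling along the long edge succeeds. The argument must therefore select the vertex $v$, the incident edge, and the retained vertex $s$ according to the local shape at the corner, and when no triangle peel is admissible I would instead peel a four-point polygon—whose three types are given by Lemma~\ref{lem:fig}—retaining exactly one of its vertices as $s$. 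Carrying out this case analysis, organized by the classification in Lemma~\ref{lem:fig}, is the delicate heart of the proof; once a valid $Q$ and $s$ are exhibited in every case, the induction closes as above.
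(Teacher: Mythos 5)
Your proposal sets up the same induction as the paper --- on $\OL(P)$, with base cases $\OL(P)\in\{3,4\}$ --- and correctly isolates what the inductive step must deliver: a piece $Q$ with $\OL(Q)\in\{3,4\}$, a remainder $P'$ that stays full-dimensional and loses exactly the two discarded lattice points, and the attachment condition $Q\cap P'=\{s\}$. But you never actually produce such a $Q$. You show that your default corner peel fails in concrete cases (collapse of $P'$ to a line, reabsorption of $a$ into the hull), and you then assert that a case analysis ``organized by Lemma~\ref{lem:fig}'' would repair every failure, without carrying it out. That existence claim is not a routine verification to be deferred; it \emph{is} the lemma, so the argument has a genuine gap precisely where you locate the ``delicate heart.'' For contrast, the paper closes this gap with a selection device your outline lacks: choose a linear functional $a$ that is injective on $P\cap\Z^2$, list the lattice points $u_1,\ldots,u_n$ in decreasing order of $a^tu$, and discard the top \emph{two}. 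Convexity then gives $\conv\{u_3,\ldots,u_n\}\cap\Z^2=(P\cap\Z^2)\setminus\{u_1,u_2\}$ for free --- no reabsorption and no corner to choose --- and the attached piece is $S=\conv\{u_1,u_2,u_i\}$, where $u_i$ is the first point not collinear with $u_1,u_2$; this $S$ contains no further lattice points and meets $\conv\{u_3,\ldots,u_n\}$ only at $u_i$. The degenerate situation you try to steer around (all remaining points collinear) is not avoided but treated head-on, by an explicit decomposition into a fan of triangles with apex at one of the two discarded points. Some such mechanism --- in the paper, the genericity of the sweep --- is exactly what your unresolved case analysis would have to supply.

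A second, smaller gap sits in your gluing step. Condition \emph{ii)} of a sparse decomposition requires a nonempty intersection to be a common vertex of \emph{both} pieces. From $Q\cap P'=\{s\}$ you only conclude that each piece $R$ of the decomposition of $P'$ meets $Q$ in at most $\{s\}$; but if $s$ lies in the relative interior of an edge of $P'$, the piece $R$ containing $s$ need not have $s$ as a vertex (a piece with four lattice points can carry $s$ as a non-vertex boundary point), and condition \emph{ii)} fails. So your peel must additionally guarantee that $s$ is an extreme point of $P'$. In the paper's construction this can be checked for $u_i$ (it is an endpoint of the face of $\conv\{u_3,\ldots,u_n\}$ cut out by the line through $u_2$ and $u_i$); in yours it is one more unverified requirement on the choice of corner.
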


\goodbreak
\begin{proof} We proceed by induction on $\OL(P)$. The statement is trivially true if $\OL(P)\in \{3,4\}$. Hence, we may assume that $\OL(P)>4$
  and choose a vector $a\in\R^2\setminus\{0\}$ such that
  $a^t v \neq a^t w$ for each $v,w \in P\cap\Z^2$ where $v\neq w$. Note
  that such an $a$ exists since $\OL(P)$ is finite. Let $P\cap\Z^2 = \{ v_1,\ldots, v_n\}$ be such that 
  \[
  a^t v_1 \ > \ a^t v_2  \ > \ \cdots \ > \ a^t v_n
  \]
  and set $ Q = \conv\{v_3,v_4,\ldots, v_n\}$.
  Then, by convexity, we obtain $Q\cap\Z^2 = P\cap\Z^2\setminus\{v_1,v_2\}$. 
  
\goodbreak  
  If $Q$ is not full-dimensional and all lattice points of $Q$ lie on a
   line, then a sparse decomposition of $P$ can
   easily be constructed. If $u_1,u_2,$ and $u_3$ are not collinear, then we can construct a sparse decomposition which is illustrated in Figure~\ref{figure:decomp_aff_dep}. Let $P_1=\conv\{u_1,u_2,u_3\}$. Then, by design, the triangle $P_1$ does not contain any other lattice point and at least one of $u_1$ or $u_2$ are visible from all points $u_4,\ldots, u_n$. Without loss of generality, assume $u_1$ is visible. Then for all $2\leq i\leq \lfloor \frac{n}{2} \rfloor -1$ define $P_i =\conv \{u_1,u_{2i},u_{2i+1}\}$, $P_{\lfloor \frac{n}{2} \rfloor} =\conv \{u_1,u_{n-2},u_n\}$ if $n$ is even, and $P_{\lfloor \frac{n}{2} \rfloor} =~\conv \{u_1,u_{n-1},u_n\}$ if $n$ is odd. Then $\{P_1,\ldots, P_{\lfloor \frac{n}{2} \rfloor}\}$ is a sparse decomposition. If $u_1,u_2,$ and $u_3$ are collinear, then a sparse decomposition can be obtained by instead setting $P_1=\conv\{u_2,u_3,u_4\}$.
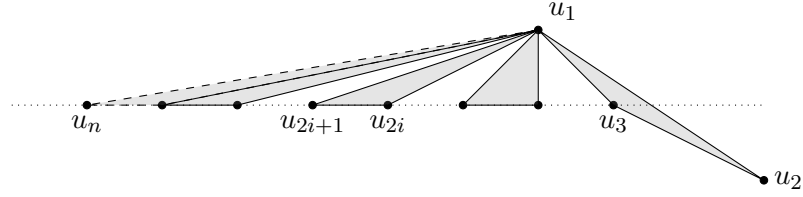
\begin{figure}
    \begin{tikzpicture}
\node[draw,circle,inner sep=1pt,fill] at (-1,0) {};
\node[draw,circle,inner sep=1pt,fill] at (0,0) {};
\node[draw,circle,inner sep=1pt,fill] at (1,0) {};
\node[draw,circle,inner sep=1pt,fill] at (2,0) {};
\node[draw,circle,inner sep=1pt,fill] at (3,0) {};
\node[draw,circle,inner sep=1pt,fill] at (4,0) {};
\node[draw,circle,inner sep=1pt,fill] at (5,0) {};
\node[draw,circle,inner sep=1pt,fill] at (6,0) {};
\node[draw,circle,inner sep=1pt,fill] at (5,1) {};          
\node[draw,circle,inner sep=1pt,fill] at (8,-1) {}; 

\draw [thin, fill=gray, fill opacity=0.2] (0,0)--(1,0)--(5,1)--(0,0);

\draw [thin, fill=gray, fill opacity=0.2] (2,0)--(3,0)--(5,1)--(2,0);
\draw [thin, fill=gray, fill opacity=0.2] (4,0)--(5,0)--(5,1)--(4,0);
\draw [thin, fill=gray, fill opacity=0.2] (6,0)--(5,1)--(8,-1)--(6,0);
\draw [thin, dashed, fill=gray, fill opacity=0.2] (5,1)--(-1,0)--(0,0)--(5,1);
\draw (5,1) node [above right] {$u_1$};
\draw (8,-1) node [right] {$u_2$};
\draw (6,0) node [below] {$u_3$};
\draw (3,0) node [below] {$u_{2i}$};
\draw (2,0) node [below] {$u_{2i+1}$};
\draw (-1,0) node [below] {$u_n$};
\draw [thin, dotted] (-2,0)--(8,0);
\end{tikzpicture}
    \caption{Sparse decomposition of $P$ for the case of a collinear $Q$.}
    \label{figure:decomp_aff_dep}
\end{figure}

\goodbreak
Suppose $Q$ is full-dimensional. Then, by the induction hypothesis, there is a sparse decomposition $\mathcal{D}_Q$ of $Q$. Let $i$ be the smallest index such that the points $u_1,u_2,u_{i}$ do not lie on a common straight line. By construction, the simplex $S=\conv (u_1,u_2,u_{i})$ contains no other lattice points and, thus, $\mathcal{D}_Q\cup\{S\}$ is a sparse decomposition of $P$.
\end{proof}

\goodbreak
\begin{lemma} \label{ref:lemma_h2_small_polytope}
If $P\in\polyt$ is a lattice polygon containing exactly three or four lattice points, then $h_2^2(P)$ is positive semidefinite.
\end{lemma}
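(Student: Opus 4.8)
The plan is to reduce, via Lemma~\ref{lem:fig}, to the finitely many shapes of a $3$- or $4$-point lattice polygon and to read off $h_2^2$ from Proposition~\ref{h* r=2}, whose coefficient of $t^2$ gives
\[
h_2^2(P) \ = \ \sum_E (y+z)^2 - \sum_V x^2
\]
for any unimodular triangulation of $P$. Since positive semidefiniteness is preserved under the linear part of a unimodular transformation by the identity $h^r_i(\phi P)(\xi,\xi)=h^r_i(P)(\phi^t\xi,\phi^t\xi)$, I factor any affine unimodular map as $\phi$ composed with a lattice translation and thereby fix one representative $P_0$ of each shape together with its triangulation from Figure~\ref{figure:polygon4points}. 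The one subtlety is that, as Example~\ref{ex:counter_ex_monotonicity} warns, $h_2^2$ is \emph{not} translation invariant, so I must establish positive semidefiniteness of $h_2^2(P_0+w)$ for every lattice vector $w\in\Z^2$, not merely for $P_0$ itself.

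For the three-point case, $P_0$ is a unimodular triangle with vertices $v_1,v_2,v_3$, and the formula above collapses to $h_2^2(P_0)=(v_1+v_2+v_3)^2$. This is a perfect square, hence positive semidefinite, and it remains one after any translation (the sum of vertices simply shifts by $3w$), so this case needs no further argument.

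For the three four-point shapes I would compute $h_2^2(P_0)$ explicitly as a $2\times2$ matrix from the triangulations in Figure~\ref{figure:polygon4points} and then track the effect of a translation by $w$ using the translation covariance of the discrete moment tensor. Writing $s=w\cdot\xi$ for a test vector $\xi\in\R^2$ and expanding $\sum_E((y+z+2w)\cdot\xi)^2-\sum_V((x+w)\cdot\xi)^2$, the value $h_2^2(P_0+w)(\xi,\xi)$ becomes a quadratic in $s$,
\[
h_2^2(P_0+w)(\xi,\xi) \ = \ (4|E|-|V|)\,s^2 + \Big(\big(4\textstyle\sum_E(y+z)-2\sum_V x\big)\cdot\xi\Big)s + h_2^2(P_0)(\xi,\xi),
\]
whose leading coefficient $4|E|-|V|$ is positive in each case. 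Positive semidefiniteness for all $w$ then follows once I check, for each of the three shapes and all $\xi$, that the discriminant of this quadratic is nonpositive; a direct computation reveals it to be a nonpositive quadratic form in $\xi$ (indeed the linear term in $s$ already vanishes identically for the triangle with an interior point, so there the claim is immediate).

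The main obstacle is precisely this translation dependence: the reduction to finitely many shapes does not by itself finish the proof, because translating a fixed shape genuinely changes $h_2^2$ and can, as the half-open case in Example~\ref{ex:counter_ex_monotonicity} illustrates, turn a positive semidefinite matrix indefinite. The technical heart is therefore the discriminant estimate, namely verifying that for each of the four representatives the quadratic in $s=w\cdot\xi$ has nonpositive discriminant for every $\xi$; everything else is bookkeeping with Proposition~\ref{h* r=2} and the displayed linear invariance of $h^2$-tensors.
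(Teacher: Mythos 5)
Your proposal is correct, but it takes a genuinely different route from the paper's. You fix one coordinate representative $P_0$ per shape from Figure~\ref{figure:polygon4points} and then confront the translation dependence directly, writing $h_2^2(P_0+w)(\xi,\xi)$ as the quadratic $(4|E|-|V|)\,s^2 + \bigl(\bigl(4\sum_E(y+z)-2\sum_V x\bigr)\cdot\xi\bigr)s + h_2^2(P_0)(\xi,\xi)$ in $s=w\cdot\xi$ and checking that its discriminant is nonpositive; your expansion is right, and the computations you defer do work out (for the unit square the discriminant is $-64(\xi_1^2+\xi_2^2)$, for the triangle with an edge midpoint it is $-128\xi_1^2-28\xi_2^2$, and for the triangle with an interior lattice point the linear term indeed vanishes for the figure's representative because $v_1+v_2+v_3=0$ there). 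The paper sidesteps the translation issue entirely: it never fixes coordinates, but works with the actual lattice points of $P$ and uses only the affine relations that characterize each type from Lemma~\ref{lem:fig} --- $v_4=\tfrac{1}{3}(v_1+v_2+v_3)$ for the triangle with an interior point, $v_1+v_3=v_2+v_4$ for the parallelogram, and $v_2=\tfrac{1}{2}(v_1+v_3)$ for the triangle with a point on an edge --- then completes squares in Proposition~\ref{h* r=2} to exhibit $h_2^2(P)$ as an explicit sum of squares, e.g.\ $(v_1+v_2)^2+(v_1+v_3)^2+(v_2+v_3)^2+\tfrac{8}{9}(v_1+v_2+v_3)^2$ in the first case; since those relations and identities hold for arbitrary vertex positions, every polygon of each type, in any location, is covered at once without quantifying over $w$. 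What each approach buys: the paper's argument gives coordinate-free sum-of-squares certificates with no case-by-case estimates, while yours is more mechanical and easily automated, but it hinges on three finite discriminant computations that your write-up asserts rather than performs; to make the proof complete you should carry them out explicitly.
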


\begin{proof}
If $\OL(P)=3$, then $P=\conv(v_1,v_2,v_3)$ is a unimodular lattice simplex and the statement follows from Proposition~\ref{h* r=2} as
\[
h_2^2(P)=(v_1+v_2)^2+(v_1+v_3)^2+(v_2+v_3)^2-v_1^2-v_2^2-v_3^2=(v_1+v_2+v_3)^2.
\]

Suppose $\OL(P)=4$. We have to distinguish between the three possible cases, up to unimodular transformation, given in Figure~\ref{figure:polygon4points}. First, if $P$ contains one interior lattice point $v_4$ and vertices $v_1, v_2,v_3$, then we have $v_4=\tfrac{1}{3}(v_1+v_2+v_3)$ and Proposition~\ref{h* r=2} implies that
  \begin{align*}
    h_2^2(P) &= (v_1+v_2)^{2}  + (v_1+v_3)^{2} + (v_2+v_3)^{2} + 
    (v_1+v_4)^{2} + (v_2+v_4)^{2} + (v_3+v_4)^{2} \\
    &\quad - v_1^{2}- v_2^{2}- v_3^{2}- v_4^{2} \\
    &=(v_1+v_2)^{2} + (v_1+v_3)^{2} + (v_2+v_3)^{2} + 2v_4^2+2v_4(v_1+v_2+v_3)\\
    &= (v_1+v_2)^{2} + (v_1+v_3)^{2} + (v_2+v_3)^{2}  +
      \tfrac{8}{9}(v_1+v_2+v_3)^{2}.
  \end{align*}

Next, if $P$ is a parallelepiped, then $v_1+v_3 = v_2+v_4$ and thus
  \begin{align*}
    h_2^2(P) &= (v_1+v_2)^{2} + (v_2+v_3)^{2} + (v_3+v_4)^{2} + (v_1+v_4)^{2}\\
        &+\tfrac{1}{2}(v_1+v_3)^{2} +\tfrac{1}{2}(v_2+v_4)^{2}
    - v_1^{2}- v_2^{2}- v_3^{2}- v_4^{2} \\
    &=\tfrac{1}{2}(v_1+v_2+v_3+v_4)^{2}+\tfrac{1}{2}(v_1+v_2)^{2}
      +\tfrac{1}{2}(v_2+v_3)^{2}+\tfrac{1}{2}(v_3+v_4)^{2}+\tfrac{1}{2}(v_1+v_4)^{2}.
  \end{align*}

Finally, if $P$ has three vertices and no interior lattice point, then one lattice point of $P$, say $v_2$ as in Figure~\ref{figure:polygon4points}, lies in the relative interior of the edge given by the vertices $v_1$ and $v_3$ implying that $v_2=\frac{1}{2}(v_1+v_3)$. In this case, we obtain
  \begin{align*}
    h_2^2(P) &= (v_1+v_2)^{2} + (v_2+v_3)^{2} + (v_3+v_4)^{2} + (v_1+v_4)^{2}
    +(v_2+v_4)^{2}
    - v_1^{2}- v_2^{2}- v_3^{2}- v_4^{2}\\
    &=\tfrac{5}{2}v_1^2+\tfrac{5}{2}v_3^2+2v_4^2+3v_1v_4+3v_3v_4+3v_1v_3\\
      &=\tfrac{3}{2}(v_1+v_3+v_4)^{2} +v_1^{2} +v_3^{2} +\tfrac{1}{2}v_4^{2}.
    \qedhere
  \end{align*}
\end{proof}

\goodbreak
We will need the following geometric observation in our proof of Theorem~\ref{thm:main}.

\begin{lemma} \label{ref:lemma_planar_cond}
Let $P\in\polyt$ and $v$ be a lattice point in the relative interior of $P$. Then at least one of the following two statements is true:
  \begin{enumerate}[(i)]
      \item \label{ref:lemma_two_cond_i}
       $v=\tfrac{1}{2}(v_1+v_2)$ for lattice points $v_1, v_2 \in P$ such that $v_1\neq v_2$;
      \item \label{ref:lemma_two_cond_ii}
        $v=\tfrac{1}{3}(v_1+v_2+v_3)$ for pairwise disjoint lattice points $v_1, v_2, v_3 \in P$.
  \end{enumerate}
\end{lemma}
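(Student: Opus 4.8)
The plan is to translate so that $v=0$ and then split into two cases according to whether $0$ is the midpoint of a symmetric pair of lattice points. Since $P$ is a lattice polygon its vertices lie in $\Z^2$, so $\conv(P\cap\Z^2)=P$ and hence $0$ lies in the interior of $\conv(P\cap\Z^2)$. In particular the nonzero lattice points of $P$ are not contained in any closed half-plane through $0$, and a short angular-gap argument then produces three lattice points $a,b,c\in P\cap\Z^2$ with $0$ in the interior of $\conv\{a,b,c\}$. Case (i) is the easy alternative: if there is a nonzero lattice point $w\in P$ with $-w\in P$ as well, then $0=\tfrac12\left(w+(-w)\right)$ and statement~(i) holds with $v_1=w$ and $v_2=-w$.

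So from now on I assume that no such symmetric pair exists, and aim to produce statement~(ii), i.e.\ three distinct lattice points of $P$ summing to $0$. First I would choose, among all lattice triangles $\conv\{a,b,c\}$ with $a,b,c\in P\cap\Z^2$ and $0$ in the interior (a nonempty finite collection by the previous paragraph), one of minimal area and call it $T$. The key claim is that $T$ contains no lattice point other than its three vertices and $0$.

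To prove the claim I would argue by contradiction, assuming $m\in T\cap\Z^2$ is a further lattice point. If $m$ lies in the interior of $T$, the fan triangulation of $T$ from $m$ partitions it into three lattice triangles and $0$ lies in one of them; if $0$ lies in the interior of that smaller triangle this contradicts the minimality of the area of $T$, while if $0$ lies on one of the new edges $[m,a]$, $[m,b]$, $[m,c]$ then $0$ is a lattice point strictly between two lattice points of $P$, so its two lattice neighbours along that segment form a symmetric pair in $P$, contradicting the standing assumption. If instead $m$ lies in the relative interior of an edge of $T$, then splitting $T$ along the segment joining $m$ to the opposite vertex leads to the same two alternatives and the same two contradictions. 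The hard part is exactly this step: one must check that every way an extra lattice point could sit inside $T$ is excluded either by minimality of area or by the nonexistence of a symmetric pair, so the two cases of the lemma are genuinely intertwined here.

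Once the claim is established, the conclusion is immediate. Since $T=\conv\{a,b,c\}$ has no lattice points besides $a,b,c,0$, each of the three sub-triangles $\conv\{0,a,b\}$, $\conv\{0,b,c\}$, $\conv\{0,c,a\}$ is an empty lattice triangle and hence has area $\tfrac12$. Because $0$ is interior to $T$ these sub-triangles partition $T$, so $T$ has area $\tfrac32$ and the barycentric coordinates of $0$ with respect to $a,b,c$, being the ratios of the sub-triangle areas to the area of $T$, are all equal to $\tfrac13$. Therefore $0=\tfrac13(a+b+c)$ with $a,b,c$ distinct lattice points of $P$, which is precisely statement~(ii).
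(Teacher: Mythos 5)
Your argument reaches the correct conclusion by a genuinely different route, and its core is sound. The paper handles alternative (i) by assuming $v$ lies on no segment between lattice points of $P$ (equivalent to your symmetric-pair dichotomy), places $v$ inside a lattice triangle via Carath\'eodory's theorem, iteratively replaces that triangle by a strictly smaller one as long as it contains an extra lattice point, and finally cites the classification of lattice polygons with exactly four lattice points (Lemma~\ref{lem:fig}) to identify $v$ as the centroid. You instead take a \emph{minimal-area} lattice triangle containing $v=0$ in its interior (well-ordering in place of the paper's iteration) and finish with a self-contained computation: the three subtriangles at $0$ are empty, hence each has area $\tfrac12$ by Pick's theorem, so all barycentric coordinates of $0$ equal $\tfrac13$. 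Your case analysis in the minimality step (extra point interior to $T$ or on an edge; $0$ interior to a subtriangle or on a new internal edge, the latter killed by the no-symmetric-pair assumption) is complete, and the area argument is a clean substitute for the appeal to Lemma~\ref{lem:fig}.

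One step, however, is false as you placed it: the claim that, because the nonzero lattice points of $P$ lie in no closed half-plane through $0$, an angular-gap argument produces three lattice points whose hull contains $0$ in its \emph{interior}. Counterexample: $P=\conv\{(1,0),(-1,0),(0,1),(0,-1)\}$ with $v=0$; the nonzero lattice points lie in no closed half-plane through $0$, yet every triangle with vertices among the lattice points of $P$ has $0$ on its boundary. You invoke this claim only after assuming no symmetric pair exists, and under that assumption it is true, but the needed justification is missing: if two lattice points of $P$ lay on opposite rays from $0$, then convexity of $P$ and the even spacing of lattice points on a line would put the primitive vector $u$ of that line and $-u$ both in $P$, i.e.\ a symmetric pair. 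So in your second case no two directions are antipodal; combined with the half-plane property, all cyclic gaps between consecutive directions are less than $\pi$, and choosing any direction, the last direction within angle $\pi$ of it, and the one after that gives three points with all three gaps less than $\pi$, hence $0$ interior to their hull. Move the triangle-existence claim after the case split and insert this argument; with that repair your proof is complete.
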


\begin{proof}
If $v$ is contained in a segment formed by two lattice points in $P$, then $v$ is easily seen to be of the form given in \emph{(i)}.

Therefore, we may assume that $v$ is not contained in any line segment formed by lattice points in $P$. By Caratheodory's Theorem~(see, e.g., \cite{Schneider:CB2}), there are lattice points $v_1,v_2,v_3\in P$ such that  $v$ is contained in the simplex formed by $v_1,v_2,$ and $v_3$. If $v,v_1,v_2,v_3$ are the only lattice points in the simplex, then condition \emph{(ii)} follows from Lemma~\ref{lem:fig}. Otherwise, there is a lattice point $u\in\conv\{v_1,v_2,v_3\}\setminus\{v,v_1,v_2,v_3\}$ and, consequently, $v$ must be contained in one of
  the three lattice simplices
  \begin{align*}
    S_1 = \conv\{v_2,v_3,u\}, \quad 
    S_2 = \conv\{v_1,v_3,u\},\quad
    S_3 = \conv\{v_1,v_2,u\}.
  \end{align*}
Without loss of generality, let $v\in S_1\subsetneq \conv\{v_1,v_2,v_3\}$. By reiteration of the above procedure, each time with a replacement of $v_1$ by $u$, we eventually find affinely independent $v_1, v_2, v_3$ such that
  $\{v,v_1,v_2,v_3\} = \conv\{v_1,v_2,v_3\}\cap\Z^2$ and condition \emph{(ii)} follows again from Lemma~\ref{lem:fig}. 
  \end{proof}
  
We are now equipped to give the proof of our nonnegativity theorem.

\begin{proof}[Proof of Theorem~\ref{thm:main}]
From Proposition~\ref{h* r=2}, it immediately follows that $h_0^{2}(P),h_1^{ 2}(P)$, and $h_4^{2}(P)$ are sums of squares.

Let $\mathcal{D}=\{P_1,P_2,\ldots,P_m\}$ be a sparse decomposition of $P$ which exists by Lemma~\ref{ref:lemma_decomp} and let $\mathcal{S}$ be some triangulation of $\cup_{i=1}^mP_i$. Observe that the closure of $P \setminus  (P_1\cup\dots\cup P_m)$ is a union of not necessarily convex lattice polygons and any triangulation of $\cup_{i=1}^mP_i$ can be extended to a triangulation in $P$. Let $\mathcal{T}$ be a triangulation of $P$ such that $\mathcal{S}\subseteq\mathcal{T}$. Let $G=(V,E)$ be the edge graph of $\mathcal{T}$ and $G'=(V',E')$ be the edge graph of $\mathcal{S}$. For every $x\in V$, we define $\alpha_x = |\{i \in [m]: x\in P_i\}|$. Note that $\alpha_x \geq 1$ for all $x\in V$ since $\mathcal{D}$ is a
sparse decomposition.
Proposition~\ref{h* r=2} then implies that
  \begin{align*}
      h_2^{2}(P) &= \sum_{E}(y+z)^{ 2} - \sum_{V}x^{ 2}\\
      &= \sum_{E'} (y+z)^{ 2} 
                - \sum_{V}\alpha_x \ x^{ 2}
          +  \sum_{E\setminus E'}
                (y+z)^2
          - \sum_{V}(1-\alpha_x) \ x^{ 2} \\
          &=  \sum_{i=1}^m h_2^{2}(P_i)
          +  \sum_{E\setminus E'}
                (y+z)^2
          + \sum_{V}(\alpha_x-1) \ x^{ 2} \, ,
  \end{align*}
and therefore, by Lemma~\ref{ref:lemma_h2_small_polytope}, $h_2^2(P)$ is a sum of squares.

  We have left to show that $h_3^{2}(P)$ is also a sum of squares.
  For every $v\in V$, we define
  $N(v)=\{u\in V: \{u,v\}\in E\}$
  to be the set of vertices adjacent to $v$ in $G$. Let $E_1\subseteq \interior E$ be the set of edges that have exactly one endpoint on the boundary of $P$ and $E_2\subseteq\interior E$ be the set of edges with both endpoints on the boundary of $P$ but relative interior in $\interior P$. By Proposition~\ref{h* r=2}, we obtain
  \begin{align*}
      h_3^{2}(P) &= \sum_{\interior E}
      (y+z)^{ 2} - \sum_{\interior{V}}x^{ 2}\\
      &= \sum_{\substack{v \in \interior{V}}}\left(\sum_{u \in N(v)}\left(\frac{1}{2}(v+u)^{ 2}\right) - v^{ 2}\right)  + \sum _{E_1} \frac{1}{2}(y+z)^2 + \sum _{E_2} (y+z)^2 \, .
  \end{align*}
  It is thus sufficient to show that
  \begin{equation*}
    a(v) := \sum_{u \in N(v)}\left(\frac{1}{2}(v+u)^{ 2} - v^{ 2}\right)
  \end{equation*}
  is a sum of squares for all $v\in \interior{V}$. In view of Lemma~\ref{ref:lemma_planar_cond}, we
  distinguish two cases. First, suppose that there are 
  $v_1, v_2 \in V \setminus \{v\}$ such that $v=\tfrac{1}{2}(v_1+v_2)$.
  Then
  \begin{align*}
    a(v)&=\tfrac{1}{2}(v + v_1)^{ 2} +\tfrac{1}{2}(v + v_2)^{ 2} - v^{ 2} + \sum _{u\in N(v)\setminus \{v_1,v_2\}} \tfrac{1}{2}(v+u)^{ 2}\\
    &= \tfrac{1}{2}(v_1+v_2)^{ 2} + \tfrac{1}{2} v_1^{ 2} + \tfrac{1}{2} v_2^{2}+ \sum _{u\in N(v)\setminus \{v_1,v_2\}} \tfrac{1}{2}(v+u)^{ 2} \, .
  \end{align*}
  In the second case, there exist pairwise disjoint
  $v_1, v_2, v_3 \in V \setminus \{v\}$ such that $v=\tfrac{1}{3}(v_1+v_2+v_3)$.
  Therefore
  \begin{align*}
    a(v)&=\tfrac{1}{2}(v + v_1)^{ 2}
        +\tfrac{1}{2}(v + v_2)^{ 2}
        +\tfrac{1}{2}(v+v_3)^{ 2} - v^{ 2}+\sum _{u\in N(v)\setminus \{v_1,v_2,v_3\}} \tfrac{1}{2}(v+u)^{ 2}\\
    &= \tfrac{7}{18}(v_1+v_2+v_3)^{ 2}
        + \tfrac{1}{2} v_1^{ 2}
        + \tfrac{1}{2} v_2^{ 2}
        + \tfrac{1}{2} v_3^{ 2}+\sum _{u\in N(v)\setminus \{v_1,v_2,v_3\}} \tfrac{1}{2}(v+u)^{ 2} \, .\qedhere
  \end{align*}
\end{proof}

\goodbreak
\section{Further results and outlook}\label{sec:further}

It is natural to ask whether Theorem~\ref{thm:main} holds true in higher dimensions. Using the software package polymake~\cite{polymake:2017,polymake:2000} we have calculated the $h^2$-tensor polynomials of several hundred randomly generated polytopes in dimension $3$ and $4$. Based on these computational results, we offer the following conjecture.
\begin{conj}\label{conj:pos} 
For $d\geq 1$, the coefficients of the $h^2$-tensor polynomial of any of a lattice polytope in $\R^d$ are positive semidefinite.
\end{conj}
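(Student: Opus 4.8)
The plan is to attack the conjecture by the same overall strategy that proves Theorem~\ref{thm:main}, reducing everything to the observation that a symmetric tensor in $\TT^2$ is positive semidefinite precisely when it is a sum of squares. Two of the coefficients are immediate in every dimension: for a $d$-dimensional lattice polytope $P$ one has $h^2_1(P)=\OL^2(P)=\sum_{x\in P\cap\Z^d}x^2$ and $h^2_{d+2}(P)=\OL^2(P^o)=\sum_{x\in P^o\cap\Z^d}x^2$, which are manifestly sums of squares, while $h^2_0(P)=0$. The entire difficulty concentrates in the intermediate coefficients $h^2_2,\dots,h^2_{d+1}$, which arise as differences and whose positive semidefiniteness is invisible term by term.

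I would first try to carry the three structural ingredients of the planar proof into higher dimensions. These are: a formula expressing each $h^2_i(P)$ through the faces of a triangulation, extending Proposition~\ref{h* r=2}; a decomposition of $P$ into ``small'' lattice polytopes whose $h^2$-tensors can be verified to be sums of squares by direct computation, extending the sparse decomposition of Lemma~\ref{ref:lemma_decomp} and the base case of Lemma~\ref{ref:lemma_h2_small_polytope}; and a gluing step in which the inclusion--exclusion overcounting along shared faces produces only additional squares. In dimension two this worked because $h^2_2(P)$ rewrites as $\sum_i h^2_2(P_i)+\sum_{E\setminus E'}(y+z)^2+\sum_V(\alpha_x-1)x^2$ and because each interior lattice point $v$ is a midpoint or a centroid of its neighbours, turning the local contribution $a(v)$ into a sum of squares.

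The main obstacle is that this entire scheme rests on the existence of unimodular triangulations, equivalently on the fact that empty lattice simplices in the plane have normalized volume one. In dimension $\geq 3$ this fails: empty lattice simplices can have arbitrarily large normalized volume (as Reeve's tetrahedra already show in dimension three), so both Lemma~\ref{lem:fig} and the midpoint/centroid dichotomy of Lemma~\ref{ref:lemma_planar_cond} break down, their only lattice points being the vertices and admitting no usable barycentric relation. Consequently Proposition~\ref{h* r=2} has no direct analogue, and the naive route through the half-open decomposition of Theorem~\ref{thm:half-open decomposition} together with the simplex formula of Theorem~\ref{prop:h_cone} is blocked by the non-monotonicity exhibited in Example~\ref{ex:counter_ex_monotonicity}, where individual half-open simplices already contribute $h^2$-tensors that are not positive semidefinite.

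In light of this, I would first target the restricted class of lattice polytopes admitting a unimodular triangulation, for which a face-indexed expression for the intermediate $h^2_i$ should still follow from Theorem~\ref{prop:h_cone}, and attempt to group the resulting contributions into squares via a higher-dimensional analogue of the local term $a(v)$. The decisive and most delicate point will be to show that the difference structure of each intermediate coefficient is always dominated, in the positive semidefinite order, by squares associated to the faces through a given lattice point. Establishing such a domination without the barycentric crutch supplied by Lemma~\ref{ref:lemma_planar_cond} is where I expect the real difficulty, and very likely a genuinely new idea, to lie.
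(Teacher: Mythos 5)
The statement you were asked to prove is not a theorem of the paper at all: it is Conjecture~\ref{conj:pos}, for which the authors offer no proof, only computational evidence (several hundred randomly generated polytopes in dimensions $3$ and $4$, checked with polymake). So there is no paper proof to compare your attempt against, and the only question is whether your proposal itself closes the gap. It does not. What you have written is a correct diagnosis of the obstructions, and it agrees almost verbatim with the authors' own closing remark: the proof of Theorem~\ref{thm:main} hinges on the fact that every lattice polygon admits a unimodular triangulation, this fails in dimension $\geq 3$ (your citation of Reeve's tetrahedra is exactly the standard witness), and hence any proof of Conjecture~\ref{conj:pos} ``would need to be conceptually different.'' Your identification of the easy coefficients is also correct: $h_0^2(P)=0$, $h_1^2(P)=\OL^2(P)$ and $h_{d+2}^2(P)=\OL^2(P^o)$ are sums of squares in every dimension, so the content of the conjecture sits entirely in $h_2^2,\dots,h_{d+1}^2$.

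The concrete gap is that, beyond those trivial coefficients, you prove nothing in any dimension $d\geq 3$ --- not even for the restricted class of polytopes admitting a unimodular triangulation, which you single out as a first target. There you only assert that a face-indexed formula ``should still follow'' from Theorem~\ref{prop:h_cone} and that the resulting contributions would have to be regrouped into squares by a higher-dimensional analogue of the local term $a(v)$; but that regrouping is precisely the missing mathematics. The planar argument needs two ingredients that have no known replacements: Lemma~\ref{ref:lemma_planar_cond} (every interior lattice point is a midpoint or a centroid of nearby lattice points), which fails for empty simplices of large normalized volume, and a decomposition scheme that avoids summing cell by cell, since Example~\ref{ex:counter_ex_monotonicity} shows individual half-open cells can already contribute tensors that are not positive semidefinite. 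Naming where the difficulty lies, however accurately, is a research program rather than a proof; as far as the paper (and, to my knowledge, the literature) is concerned, the statement remains open.
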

For our proof of Theorem~\ref{thm:main}, it was crucial that every lattice polygon has a unimodular triangulation. Since this no longer holds true in general for higher dimensional polytopes, a proof of Conjecture~\ref{conj:pos} would need to be conceptually different. 

Finding inequalities among the coefficients of the $h^\ast$-polynomial of a
lattice polytope, beyond Stanley's Nonnegativity Theorem, is currently of
great
interest in Ehrhart theory. The ultimate goal is a classification of all
possible $h^\ast$-polynomials: a classification of all
$h^\ast$-polynomials
of degree $2$ can be found in \cite[Proposition 1.10]{HM09}. Another fundamental
inequality
is due to Hibi~\cite{MR1275662} who proved that $h_i(P)-h_1(P)\geq 0$ for all
$1\leq i<d$ and full-dimensional lattice polytopes that have an interior
lattice
point. Calculations with polymake again suggest that there
might be a version for matrices motivating the following conjecture.
\begin{conj}\label{conj:hibi}
Let $P$ be a lattice polytope containing a lattice point in its interior.
Then
the matrices $h^2_i(P)-h^2_1(P)$ for $1\leq i < \dim(P)+2$ are positive
semidefinite.
\end{conj}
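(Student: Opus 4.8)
The plan is to reduce the matrix inequality to a one-parameter family of scalar Hibi-type inequalities and then to adapt the classical proof of Hibi's lower bound theorem~\cite{MR1275662}. Recall that a symmetric matrix $M$ is positive semidefinite if and only if $w^tMw\geq 0$ for every $w\in\R^d$. Applying the tensor $h^2_i(P)-h^2_1(P)$ to a pair $(w,w)$ and using the expansion~\eqref{eq:expansion}, one sees that Conjecture~\ref{conj:hibi} is equivalent to the assertion that, for every $w\in\R^d$ and every $1\leq i<\dim(P)+2$, the scalars $g_i:=h^2_i(P)(w,w)$ satisfy $g_i\geq g_1$. The numbers $g_0,\dots,g_{\dim(P)+2}$ are exactly the $h^\ast$-vector of the weighted lattice-point valuation $\mu_w(P):=\sum_{x\in P\cap\Z^d}(w\cdot x)^2$, because applying $(w,w)$ to $\OL^2(nP)=\sum_i h^2_i(P)\binom{n+\dim(P)+2-i}{\dim(P)+2}$ yields $\mu_w(nP)=\sum_i g_i\binom{n+\dim(P)+2-i}{\dim(P)+2}$. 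Thus the conjecture is precisely Hibi's lower bound inequality for the $h^\ast$-polynomial of the degree-$(\dim(P)+2)$ valuation $\mu_w$, with the interior lattice point of $P$ playing the role of the interior point in Hibi's hypothesis.

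First I would settle the two-dimensional case, where the explicit formulas of Proposition~\ref{h* r=2} are available and Theorem~\ref{thm:main} already gives positive semidefiniteness of each $h^2_i(P)$. Here the only nontrivial inequalities are for $i=2,3$, namely $h^2_2(P)-h^2_1(P)=\sum_E(y+z)^2-2\sum_V x^2$ and $h^2_3(P)-h^2_1(P)=\sum_{E^o}(y+z)^2-\sum_{V^o}x^2-\sum_V x^2$. I would attack these along the lines of the proof of Theorem~\ref{thm:main}: pass to a unimodular triangulation, use a sparse decomposition (Lemma~\ref{ref:lemma_decomp}) to localize the computation to polygons with three or four lattice points, and verify the inequality on each piece. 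The vertex-multiplicity bookkeeping with $\alpha_x\geq 1$ and the local expansions $a(v)$ reappear, but now the subtracted term $\sum_V x^2$ is larger; the interior lattice point of $P$ is what must supply the extra positive contribution needed to absorb it.

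For general dimension the plan is to mirror Hibi's original argument at the level of the scalar valuation $\mu_w$. The structural input I would aim for is a symmetric decomposition $\sum_i g_i t^i=A(t)+t^{s}B(t)$ of its $h^\ast$-polynomial into palindromic polynomials $A,B$ with nonnegative coefficients, of the kind underlying the classical proof, where the shift $s$ is dictated by the smallest dilate whose relative interior meets the lattice and where the top coefficients are controlled by the tensor reciprocity of Theorem~\ref{thm:tensorreciprocity}; the latter identifies $g_{\dim(P)+2}=\sum_{x\in P^o}(w\cdot x)^2$ and relates the high coefficients $g_i$ to weighted interior counts. Granting such a decomposition, the inequalities $g_i\geq g_1$ would follow by the same comparison of coefficients that proves Hibi's theorem, with the interior lattice point guaranteeing that the block $B$ is nonzero.

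The hard part will be producing this symmetric decomposition in the weighted setting. Unlike the usual $h^\ast$-polynomial, the family $g_i$ is neither translation invariant, since $\mu_w$ is only translation covariant, nor monotone under inclusion; the tensor-level failure of monotonicity is exactly what Example~\ref{ex:counter_ex_monotonicity} exhibits. Consequently the equivalence between monotonicity and nonnegativity for translative valuations of~\cite{jochemkocombinatorial}, together with the half-open and irrational decompositions that drive the standard proofs, does not transfer directly, and a genuinely new construction of the palindromic blocks seems to be required. Moreover the decomposition presupposes the nonnegativity $g_i\geq 0$, which in dimension at least three is itself open (Conjecture~\ref{conj:pos}); a complete proof of Conjecture~\ref{conj:hibi} therefore almost certainly has to be developed in tandem with a resolution of Conjecture~\ref{conj:pos}. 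For this reason I would treat the two-dimensional statement as the realistic first milestone and regard the general case as contingent on first establishing the weighted analogue of Stanley's nonnegativity theorem.
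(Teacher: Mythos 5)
The statement you set out to prove is Conjecture~\ref{conj:hibi}: the paper does not prove it, but poses it on the strength of polymake computations in dimensions $3$ and $4$, so there is no proof of the authors to compare yours against --- and your submission does not close the gap either, since, as you yourself say, it is a research program rather than a proof. The parts of your write-up that are correct and worthwhile are the reformulation and the diagnosis. Positive semidefiniteness of $h^2_i(P)-h^2_1(P)$ is indeed equivalent to the scalar inequalities $g_i\geq g_1$ for every $w\in\R^d$, where $g_i=h^2_i(P)(w,w)$, and these $g_i$ are exactly the binomial-basis coefficients of the weighted enumerator $\mu_w(nP)=\sum_{x\in nP\cap\Z^d}(w\cdot x)^2$, so the conjecture is a Hibi-type lower bound theorem for a translation-covariant valuation; this matches the analogy by which the paper motivates the conjecture. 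Your list of obstructions is also accurate: $\mu_w$ is only translation covariant, monotonicity fails (Example~\ref{ex:counter_ex_monotonicity}), and in dimension at least three even nonnegativity of the $g_i$ is the open Conjecture~\ref{conj:pos}.

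However, no case of the conjecture is actually established by your argument, including the two-dimensional ``milestone.'' You record $h^2_2(P)-h^2_1(P)=\sum_E(y+z)^2-2\sum_V x^2$ and propose to rerun the proof of Theorem~\ref{thm:main}, but that proof is calibrated to absorb exactly one copy of $\sum_V x^2$: its sparse-decomposition identity is
\begin{equation*}
\sum_E (y+z)^2-\sum_V x^2 \ = \ \sum_{i=1}^m h^2_2(P_i)+\sum_{E\setminus E'}(y+z)^2+\sum_V(\alpha_x-1)\,x^2 ,
\end{equation*}
and subtracting a second copy of $\sum_V x^2$ turns the correction term into $\sum_V(\alpha_x-2)\,x^2$, which can be negative (most lattice points lie in only one cell of the sparse decomposition, so $\alpha_x=1$), while Lemma~\ref{ref:lemma_h2_small_polytope} no longer controls the local pieces once a share of $\sum_V x^2$ is charged to them. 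Moreover, the hypothesis that $P$ has an interior lattice point --- which any proof must use, since for the unimodular triangle one has $(h^2_2-h^2_1)(w,w)=((v_1+v_2+v_3)\cdot w)^2-\sum_i(v_i\cdot w)^2$, e.g.\ $2w_1w_2$ for the standard triangle, which is indefinite --- enters your outline only as the remark that it ``must supply the extra positive contribution,'' with no mechanism for how it does so; likewise the palindromic decomposition $A(t)+t^sB(t)$ you invoke for general dimension is precisely the object you admit you cannot construct. So the status after your work is the same as before it: the conjecture remains open in every dimension, including two.
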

In recent years, additional inequalities for the coefficients of the
$h^\ast$-polynomial have been shown (see e.g.
~\cite{Athanasiadis04,Stapledon09,Stapledon16})
which raises the question as to whether there are analogous results for Ehrhart tensors.

\begin{ques}\label{ques:1}
Which known inequalities among the coefficients of the $h^\ast$-polynomial
of a lattice polytope can be generalized to $h^r$-tensor polynomials of
higher rank?
\end{ques}

An answer would depend on the notion of positivity that is chosen. A natural choice for higher rank $h^r$-tensors, extending positive semidefiniteness of matrices, is to define $T\in\TT^r$ to be positive semidefinite if and only if $T(v,\dots,v)\geq 0$ for all $v\in\R^d$. However, assuming this definition of positivity, there can not be any inequalities that are valid for all polytopes if the rank $r$ is odd since $T(v,\ldots, v) = (-1)^rT(-v,\ldots, -v)$.

In the case that $r$ is even, we are able to extend  another classical result, namely Hibi's  Palindromic Theorem~\cite{Hibi91} characterizing reflexive polytopes. A lattice polytope $P\in \mathcal{P}(Z^d)$ is called \textbf{reflexive} if 
\[
P = \{x\in \R^d \colon Ax \leq 1\}
\]
where $A\in Z^{d\times d}$ is an integral matrix.

\goodbreak
\begin{thm}[Hibi~\cite{Hibi91}]\label{thm:hibioriginal}
A polytope $P\in\poly$ is reflexive if and only if $h_i^{\ast}(P)=h_{d-i}^{\ast}(P)$ for all $0\leq i\leq d$.
\end{thm}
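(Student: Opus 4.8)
The plan is to route both implications through Ehrhart--Macdonald reciprocity, $\OL(nP^o)=(-1)^d\OL_P(-n)$, which lets me encode reflexivity and palindromicity as one and the same functional equation for the Ehrhart polynomial $\OL_P(n)=\OL(nP)$.

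The first step is the purely formal translation between the $h^{\ast}$-vector and the Ehrhart polynomial. Starting from $\sum_{n\ge0}\OL(nP)t^n=h^{\ast}_P(t)/(1-t)^{d+1}$ with $h^{\ast}_P(t)=\sum_{i=0}^d h^{\ast}_i(P)t^i$, the substitution $t\mapsto1/t$ shows that palindromicity $h^{\ast}_i(P)=h^{\ast}_{d-i}(P)$ is equivalent to $h^{\ast}_P(t)=t^d h^{\ast}_P(1/t)$; since the generating function of a polynomial and that of its reflection are the two Laurent expansions of a single rational function, this is in turn equivalent to the identity of polynomials
\[
\OL_P(-1-n)\ =\ (-1)^d\OL_P(n)
\]
(see, e.g., \cite{ccd}). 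Thus it suffices to prove that $P$ is reflexive if and only if this functional equation holds.

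For the forward direction I would use the geometric description of reflexivity as the statement that, after translating the interior lattice point to the origin, every facet has primitive outer normal $a\in\Z^d$ and supporting value $\max_{x\in P}a\cdot x=1$. When all supporting values equal $1$, a lattice point lies in $nP$ exactly when $a\cdot x\le n$ for every facet normal, while it lies in $(n+1)P^o$ exactly when $a\cdot x<n+1$, i.e.\ $a\cdot x\le n$; hence $nP\cap\Z^d=(n+1)P^o\cap\Z^d$ and so $\OL(nP)=\OL((n+1)P^o)$ for all $n\ge0$. Combining this with reciprocity in the form $\OL((n+1)P^o)=(-1)^d\OL_P(-1-n)$ yields the functional equation above, and hence palindromicity.

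For the converse I would start from the functional equation, which by the first step gives $\OL(nP)=\OL((n+1)P^o)$ for all $n\ge0$. Evaluating at $n=0$ gives $\OL(P^o)=1$, so $P$ has a unique interior lattice point, which I translate to the origin; equivalently $h^{\ast}_d(P)=\OL(P^o)=1$. Since $0\in P^o$, scaling by a factor strictly less than $1$ maps $P$ into the interior of its larger dilate, so $nP\subseteq(n+1)P^o$, and as the two lattice-point counts agree the finite sets coincide: $nP\cap\Z^d=(n+1)P^o\cap\Z^d$ for every $n$. The hard part will be upgrading this family of lattice-point identities to the geometric conclusion that every facet has supporting value $1$. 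Assuming some facet had supporting value $b\ge2$ (an integer, since $a$ and the facet vertices are integral), I would derive a contradiction by exhibiting, for suitably large $n$, a lattice point $x$ with $a\cdot x=nb+1$ that is interior to $(n+1)P$ yet lies outside $nP$; the existence of such a separating lattice point---guaranteed by the primitivity of $a$ together with the lattice structure of the facet---is precisely the delicate point and is where low-dimensional intuition must be replaced by a careful general-dimension argument. Once $b=1$ for every facet, $P$ is reflexive by definition.
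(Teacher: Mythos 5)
A preliminary remark on the comparison: the paper itself does not prove this theorem --- it is quoted from Hibi~\cite{Hibi91}, and the paper only records (citing~\cite{ccd}) the ``crucial step'' that $P$ is reflexive if and only if $nP\cap\Z^d=(n+1)P^o\cap\Z^d$ for all $n\in\N$, which it then uses to prove its own generalization, Proposition~\ref{thm:hibiext}. Measured against the standard proof, your skeleton is the right one: the generating-function argument showing that palindromicity of $h^{\ast}_P$ is equivalent to the functional equation $\OL_P(-1-n)=(-1)^d\OL_P(n)$ is correct, and your forward direction is complete --- integrality of the facet data forces $nP\cap\Z^d=(n+1)P^o\cap\Z^d$, and reciprocity converts this into the functional equation. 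Your converse is also set up correctly as far as it goes: $\OL(P^o)=1$, translate the interior lattice point to the origin, use $nP\subseteq(n+1)P^o$ plus equality of counts to get equality of lattice-point sets for all $n$.

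The genuine gap is the final step of the converse, and you flagged it yourself without closing it. Suppose some facet lies on $\{x\colon a\cdot x=b\}$ with $a$ primitive integral and $b\geq 2$ (note $b$ is a positive integer because the facet contains lattice points and $0\in P^o$). You claim a lattice point $y$ with $a\cdot y=nb+1$, $y\in(n+1)P^o$, $y\notin nP$ exists ``by primitivity of $a$ together with the lattice structure of the facet,'' but primitivity only guarantees that the hyperplane $H=\{x\colon a\cdot x=nb+1\}$ meets $\Z^d$ at all; it does not place any such point inside $(n+1)P^o$, which is the entire content of the step. What is missing is a quantitative argument, for instance: the section $(n+1)P\cap H$ equals $(n+1)$ times the section of $P$ at height $\tfrac{nb+1}{n+1}=b-\tfrac{b-1}{n+1}$, and since these sections converge to the $(d-1)$-dimensional facet $F$ as $n\to\infty$, the relatively open set $(n+1)P^o\cap H$ contains a $(d-1)$-dimensional ball whose radius grows linearly in $n$. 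The set $\Z^d\cap H$ is a translate of the fixed $(d-1)$-dimensional lattice $\Z^d\cap a^{\perp}$ (nonempty by primitivity, since $nb+1\in\Z$), whose covering radius is a constant independent of $n$; once the ball's radius exceeds that constant, it contains a lattice point $y$, and $a\cdot y=nb+1>nb$ expels $y$ from $nP$, contradicting $nP\cap\Z^d=(n+1)P^o\cap\Z^d$. (Here $b\geq 2$ is used exactly to ensure $nb+1<(n+1)b$, so the ball can be taken in the open polytope; your sketch has this part right.) Without this covering-radius argument, or an equivalent substitute, the backward implication --- half of the theorem --- remains unproven.
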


 A crucial step in the proof of Theorem~ \ref{thm:hibioriginal} is to observe that a polytope $P$ is reflexive if and only if
\[
nP\cap \Z^d =(n+1)\interior P \cap \Z^d
\]
for all $n\in\N$ (see \cite{ccd}). We use this fact to give the following generalization.
\begin{prop}\label{thm:hibiext}
Let $r\in\N$ be even and $P\in\poly$ be a lattice polytope that contains the origin in its relative interior. The polytope $P$ is reflexive if and only if $h_i^{r}=h_{d+r-i}^{r}$ for all $0\leq i\leq d+r$.
\end{prop}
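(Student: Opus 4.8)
The plan is to mimic the classical proof of Hibi's Palindromic Theorem by passing to generating functions and exploiting the reciprocity theorem (Theorem~\ref{thm:tensorreciprocity}) together with the characterization of reflexivity via the lattice-point identity $nP\cap\Z^d=(n+1)\interior P\cap\Z^d$. First I would record, from the defining expansion~\eqref{eq:h_cone}, that the palindromicity condition $h_i^r=h_{d+r-i}^r$ for all $i$ is equivalent to the functional equation
\[
h_P^r(t)\ = \ t^{d+r}h_P^r(1/t)
\]
on the $h^r$-tensor polynomial, which in turn translates via~\eqref{eq:h_cone} into a symmetry of the rational generating function $\sum_{n\geq0}\OL^r(nP)t^n$. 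The engine of the proof is the tensor reciprocity $\OL_P^r(-n)=(-1)^{\dim(P)+r}\OL^r(nP^o)$ (here $\dim(P)=d$), which relates the generating function of $\OL^r(nP)$ to that of $\OL^r(nP^o)$ after the substitution $t\mapsto 1/t$.

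For the forward direction, assuming $P$ is reflexive, I would use the key fact that reflexivity is equivalent to $nP\cap\Z^d=(n+1)\interior P\cap\Z^d$ for all $n\in\N$. Because $P$ contains the origin in its interior and $r$ is even, this set equality upgrades to an identity of discrete moment tensors: $\OL^r(nP)=\OL^r((n+1)P^o)$ for all $n\geq0$, since the moment tensor $\OL^r$ depends only on the underlying lattice-point set. Combining this with the reciprocity theorem and the standard fact that $\sum_{n\geq0}\OL^r(nP^o)t^n$ is obtained from $\sum_{n\geq0}\OL^r(nP)t^n$ by the reciprocity substitution, one deduces the desired functional equation for $h_P^r(t)$, hence palindromicity. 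The evenness of $r$ is what makes the sign $(-1)^{d+r}=(-1)^d$ behave exactly as in the scalar case, so the tensor-valued numerator palindromes cleanly rather than up to a sign.

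For the converse, assuming $h_i^r=h_{d+r-i}^r$ for all $i$, I would run the symmetry of the generating function backwards. Palindromicity of $h_P^r(t)$ gives, through~\eqref{eq:h_cone} and reciprocity, the tensor identity $\OL^r(nP)=\OL^r((n+1)P^o)$ for all $n$. To extract reflexivity I would first specialize to rank $r=0$, or rather evaluate the tensor identity against a fixed test direction, to recover the scalar lattice-point count $\OL(nP)=\OL((n+1)P^o)=\OL((n+1)\interior P)$ for all $n$, which is precisely the scalar reflexivity criterion used in Hibi's theorem; this forces $P$ to be reflexive. The main obstacle I anticipate is the passage from an identity of tensors to the set-theoretic identity $nP\cap\Z^d=(n+1)\interior P\cap\Z^d$ (or vice versa): equality of discrete moments $\sum_{x\in A}x^r=\sum_{x\in B}x^r$ does not in general force $A=B$, so the converse direction requires care. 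I would resolve this by reducing to the rank-zero component — noting that since $P$ contains the origin in its interior the relevant point sets are nested, $(n+1)\interior P\cap\Z^d\subseteq nP\cap\Z^d$ always holds, so the two sets coincide precisely when their cardinalities agree — thereby reducing the tensor identity to the already-understood scalar case rather than attempting to invert the moment map directly.
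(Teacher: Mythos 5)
Your reduction of palindromicity to the identity $\OL^r(nP)=\OL^r((n+1)\interior P)$ for all $n$ (via Theorem~\ref{thm:tensorreciprocity} and coefficient comparison) and your forward direction coincide with the paper's proof. The genuine gap is in the converse. From the fixed even-rank identity $\OL^r(nP)=\OL^r((n+1)\interior P)$ you cannot ``specialize to rank $r=0$'': the hypothesis concerns one fixed $r$, and a rank-$r$ tensor identity has no rank-zero component to extract. Nor does evaluating against a test direction $v$ ``recover the scalar lattice-point count'': writing $A=nP\cap\Z^d$ and $B=(n+1)\interior P\cap\Z^d$, the evaluation yields the weighted sums $\sum_{x\in A}(x\cdot v)^r$ and $\sum_{x\in B}(x\cdot v)^r$, in which the origin and any lattice point orthogonal to $v$ contribute $0$ rather than $1$. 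Consequently the cardinality equality $|A|=|B|$ --- which your nested-sets observation would indeed convert into the set equality $A=B$ --- is never established, and the argument stops one step short. (A secondary error: your nesting is backwards. Since $0\in\interior P$, one has $nP\subseteq(n+1)\interior P$, so it is the open dilate that can contain extra lattice points.)

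The missing idea, which is exactly how the paper closes the converse (in contrapositive form), is a positivity argument using the evenness of $r$. If $P$ is not reflexive, then $A\subsetneq B$ for some $n$, and every extra point $x_0\in B\setminus A$ is nonzero because the origin already lies in $A$. The difference of the two evaluations is $\sum_{x\in B\setminus A}(x\cdot v)^r$, a sum of nonnegative terms since $r$ is even; choosing $v$ with $x_0\cdot v\neq 0$ makes it strictly positive, so $\OL^r(nP)\neq\OL^r((n+1)\interior P)$ and palindromicity fails. Equivalently, run in your direction: equality of even-rank moments of nested sets forces $(x\cdot v)^r=0$ for all $v$ and all $x\in B\setminus A$, hence $B\setminus A\subseteq\{0\}$, which is impossible unless $A=B$. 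Your proposal assembles the right ingredients (test vectors, nestedness, the origin hypothesis) but never invokes the nonnegativity of even powers, which is the step that actually bridges the tensor identity to the set identity; evenness of $r$ is needed here, not only for the sign in the reciprocity theorem.
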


\begin{proof}
By Theorem~\ref{thm:tensorreciprocity} and comparing coefficients in equation~\eqref{eq:expansion}, it follows that the assertion $h_i^{r}(P)=h_{d+r-i}^{r}(P)$ is equivalent to $\OL^{r}((n-1)P)=\OL^{r}(n\interior P)$ for all integers $n$. 

If $P$ is a reflexive polytope, then $\OL^{r}((n-1)P)=\OL^{r}(n\interior P)$ for all integers $n$ since, as given above, we have $(n-1)P\cap \Z^d = n\interior P \cap \Z^d$.

Now assume that $P$ is not reflexive. Then there exists an $n\in\N$ such that
\[
(n-1)P\cap \mathbb{Z}^d \subsetneq n\interior P \cap \mathbb{Z}^d \, .
\]
Therefore, for any $v\in \mathbb{R}^d\setminus \{0\}$, we obtain
\[
\sum _{x\in (n-1)P\cap \mathbb{Z}^d} (x^tv)^r  \ < \ \sum _{x\in n\interior P\cap \mathbb{Z}^d} (x^tv)^r \, 
\]
and, in particular, $\OL^{r}((n-1)P)\not =\OL^{r}(n\interior P)$ completing the proof.
\end{proof}

Note that the proof of Proposition~\ref{thm:hibiext} shows that for odd rank $r$ palindromicity of the $h^r$-tensor polynomial of a reflexive polynomial is still necessary, but not sufficient, since all centrally symmetric polytopes have a palindromic $h^r$-tensor polynomial; namely the constant zero polynomial.

\section*{Acknowledgements} 
Katharina Jochemko was partially supported by the Knut and Alice Wallenberg Foundation. Laura Silverstein was supported by the Austrian Science Fund (FWF) Projects P25515-N25 and I3017-N35.

  \bibliographystyle{siam}
\bibliography{Paper}
  
\end{document}